\documentclass[a4paper, 10pt, american]{amsart}

\usepackage{times,latexsym,amssymb}
\usepackage{amsmath,amsthm,bm}
\usepackage{xcolor}
\usepackage[colorlinks,pdfpagelabels,pdfstartview = FitH,bookmarksopen
= true,bookmarksnumbered = true,linkcolor = blue,plainpages =
false,hypertexnames = false,citecolor = red,pagebackref=false]{hyperref}

\usepackage{amsbsy}
\usepackage{amstext}
\usepackage{amssymb}
\usepackage{esint}
\usepackage{stmaryrd}
\usepackage[pdftex]{graphicx}
\usepackage{floatflt}
\usepackage{appendix}

\allowdisplaybreaks
\newtheorem{theorem}{Theorem}
\newtheorem{lemma}[theorem]{Lemma}
\newtheorem{definition}[theorem]{Definition}

\newtheorem{remark}[theorem]{Remark}
\numberwithin{theorem}{section}
\numberwithin{equation}{section}

\def\N{\mathbb{N}}
\def\R{\mathbb{R}}

\newcommand\sfI{{\boldsymbol{\mathsf I}}}

\newcommand\sfm{{\boldsymbol{\mathsf m}}}
\newcommand\sfA{{\boldsymbol{\mathsf A}}}
\newcommand\sfE{{\boldsymbol{\mathsf E}}}
\newcommand\sfa{{\boldsymbol{\mathsf a}}}
\newcommand\sff{{\boldsymbol{\mathsf f}}}

\newcommand\sfF{{\boldsymbol{\mathsf F}}}

\renewcommand{\d}{\mathrm{d}}
\newcommand{\dx}{\mathrm{d}x}

\newcommand{\dt}{\mathrm{d}t}

\renewcommand{\epsilon}{\varepsilon}

\DeclareMathOperator{\Div}{div}

\DeclareMathOperator{\loc}{loc}

\renewcommand{\rho}{\varrho}

\def\eqn#1$$#2$${\begin{equation}\label#1#2\end{equation}}

\let\TeXchi\chi
\newbox\chibox
\setbox0 \hbox{\mathsurround0pt $\TeXchi$}
\setbox\chibox \hbox{\raise\dp0 \box 0 }
\def\chi{\copy\chibox}

\def\Xint#1{\mathchoice
    {\XXint\displaystyle\textstyle{#1}}%
    {\XXint\textstyle\scriptstyle{#1}}%
    {\XXint\scriptstyle\scriptscriptstyle{#1}}%
    {\XXint\scriptscriptstyle\scriptscriptstyle{#1}}%
    \!\int}
\def\XXint#1#2#3{\setbox0=\hbox{$#1{#2#3}{\int}$}
    \vcenter{\hbox{$#2#3$}}\kern-0.5\wd0}
\def\bint{\Xint-}
\def\dashint{\Xint{\raise4pt\hbox to7pt{\hrulefill}}}

\def\Xiint#1{\mathchoice
    {\XXiint\displaystyle\textstyle{#1}}%
    {\XXiint\textstyle\scriptstyle{#1}}%
    {\XXiint\scriptstyle\scriptscriptstyle{#1}}%
    {\XXiint\scriptscriptstyle\scriptscriptstyle{#1}}%
    \!\iint}
\def\XXiint#1#2#3{\setbox0=\hbox{$#1{#2#3}{\iint}$}
    \vcenter{\hbox{$#2#3$}}\kern-0.5\wd0}
\def\biint{\Xiint{-\!-}}

 \subjclass[2020]{35K40, 35B65, 35K65, 35K67}
 \keywords{parabolic systems with polynomial $p$-growth, gradient estimates}

\begin{document}
\title[]{Calder\'on-Zygmund estimates for parabolic systems with $p$-growth and non-divergence data}

\date{\today}

\author[P. Andrade]{P\^edra Andrade}
\address{P\^edra Andrade\\
Fachbereich Mathematik, Universit\"at Salzburg\\
Hellbrunner Str. 34, 5020 Salzburg, Austria}
\email{pedra.andrade@plus.ac.at}

\author[K. Moring]{Kristian Moring}
\address{Kristian Moring\\
Fachbereich Mathematik, Universit\"at Salzburg\\
Hellbrunner Str. 34, 5020 Salzburg, Austria}
\email{kristian.moring@plus.ac.at}

\setcounter{tocdepth}{1}

\maketitle


\begin{abstract}
We obtain Calderón--Zygmund estimates for weak solutions to nonlinear parabolic systems with polynomial $p$-growth and the right-hand side in non-divergence form. 
Our approach does not require differentiability of the vector field with respect to the gradient variable and provides a unified treatment of both the singular and degenerate regimes. In particular, we establish local higher-integrability estimates for the gradient of the solution under appropriate dependency on the integrability of the right-hand side.
\end{abstract}

\section{Introduction}
In this paper, we develop a Calderón--Zygmund theory for weak solutions to nonlinear
second-order parabolic systems of the form 
\begin{equation}\label{par-sys}
    \partial_t u - \Div \sfA (x,t,Du) = \sff \qquad \mbox{ with\,  $p> \frac{2n}{n+2}$},
\end{equation}
in a space-time cylinder $\Omega_T:=\Omega\times [0,T)$, where $\Omega \subset \R^n$ is bounded and open set, $T > 0$, $u \colon \Omega_T \to \R^N$, $\sff \colon \Omega_T \to \R^N$ is in a proper Lebesgue space and $\sfA \colon \Omega_T \times \R^{Nn} \to \R^{Nn}$ is a vector field with polynomial growth rate $p$ specified in the Section~\ref{structural-conditions}. In other words, our goal is to obtain quantitative grandient estimates in terms of the integrability of the datum $\sff$.

In the case of linear elliptic equations, Calderón--Zygmund estimates originate from the theory of singular integrals developed by Calderón and Zygmund \cite{Calderon-Zygmund-1952}.  In the nonlinear elliptic setting, pioneering contributions for $p$-Laplace-type equations
with the right-hand side in divergence form were obtained by Iwaniec~\cite{Iwaniec}. More precisely, Iwaniec considered equations of the form
\[
\Div\bigl(|Du|^{p-2}Du\bigr)
=
\Div\bigl(|\sfF|^{p-2}\sfF\bigr),
\qquad p>1,
\]
and established higher integrability estimates for the gradient in terms of the
datum $\sfF$. In particular, the corresponding Calderón--Zygmund estimate can be expressed as
\begin{equation}\label{eq:C-Z}
\sfF\in L_{\mathrm{loc}}^q(\Omega, \R^{Nn})
\qquad\Longrightarrow\qquad
Du\in L_{\mathrm{loc}}^q(\Omega, \R^{Nn}),
\qquad q\ge p,    
\end{equation}
for $N =1$. Building upon Iwaniec's pioneering work, DiBenedetto and Manfredi~\cite{DiBenedetto-Manfredi}
extended these results to systems of $p$-Laplace-type equations, establishing
the corresponding gradient estimates in the vectorial setting $(N > 1)$. A further step toward more general nonlinear elliptic equations was taken by Caffarelli and Peral~\cite{Caff-Peral}, who introduced a comparison method for deriving Calderón–Zygmund estimates.

While the previous results focused on divergence-form right-hand sides, a Bogovski\u{\i} type construction, see \cite{Bogo, Galdi}, provides a framework for treating non-divergence-form data. By representing the datum $\sff$ as
\[
\sff=\Div\bigl(|\sfF|^{p-2}\sfF\bigr),
\]
which gives
\[
\sff\in L^{\frac{ns}{n(p-1)+s}}_{\mathrm{loc}}(\Omega,\mathbb{R}^{N}) \qquad
\Longrightarrow \qquad
Du\in L^s_{\mathrm{loc}}(\Omega,\mathbb{R}^{Nn}),
\qquad s\geq p,
\]
where the exponent is sharp.

Further extensions of this theory were obtained by Kinnunen and Zhou in \cite{Kinnunen-Zhou-1,Kinnunen-Zhou-2}, who extended the Calderón--Zygmund estimates to elliptic $p$-Laplace type equations with coefficients satisfying a
vanishing mean oscillation (VMO) condition. Moreover, Byun and Wang~\cite{Byun-Wang} established estimates of the form \eqref{eq:C-Z} for elliptic systems under more general $p$-growth assumptions on the vector field. We refer the reader to~\cite{Mingione:Measure-data} for a comprehensive Calderón--Zygmund theory for elliptic problems with measure data.  In the nonlocal setting, Calderón--Zygmund-type estimates for fractional
$p$-Laplace equations were obtained in \cite{BDLM-1,BDLM-2} under the
assumption that the coefficients $\sfa$ are Hölder continuous, for equations of
the form
\[
(-\sfa \Delta_p)^{\sigma}u=\sff
\qquad \text{in }\Omega.
\]
In particular, in~\cite{BDLM-2}, the following sharp result was obtained:
\[
\sff \in L^{\frac{ns}{n(p-1)+s(1-p(1-\sigma))}}_{\mathrm{loc}}(\Omega)
\quad\Longrightarrow\quad
Du\in L^s_{\mathrm{loc}}(\Omega,\mathbb{R}^n),
\qquad s\geq p .
\]
Furthermore, the approach developed in \cite{BDLM-2} for treating the right-hand side in non-divergence form provides the framework underlying the arguments used here. 

The extension of Calderón--Zygmund estimates to the parabolic setting remained an open problem for some time. Significant difficulty in this case stems from the lack of the homogeneity and scaling properties available in the elliptic case when $p\neq2$, which prevents the direct use of maximal-operator techniques and homogeneous a priori estimates. In ~\cite{Acerbi-Min}, Acerbi and Mingione were able to circumvent these difficulties by introducing a new technique to obtain Calderón--Zygmund estimates for parabolic $p$-Laplace-type systems with VMO coefficients of the form 
\[
\partial_t u-\Div\bigl(\sfa(x,t)|Du|^{p-2}Du\bigr)
=
\Div\bigl(|\sfF|^{p-2}\sfF\bigr),
\qquad p>\frac{2n}{n+2}.
\]
In particular, their result provides the parabolic counterpart of the Calderón--Zygmund estimate in \eqref{eq:C-Z}. A key ingredient of their  approach is the use of the so-called \emph{intrinsic parabolic cylinders} from~\cite{Kinnunen-Lewis:1}, see also~\cite{DiBe, DiBenedetto_Holder}. These are cylinders of the form
\[
Q_{\rho}^{(\lambda)}(x_o,t_o)
=
B_{\rho}(x_o)\times
\bigl(t_o-\lambda^{2-p}{\rho}^2,t_o + \lambda^{2-p}{\rho}^2 \bigr),
\]
where the parameter $\lambda$ is adapted to the solution heuristically through the condition 
\[
\biint_{Q_{\rho}^{
(\lambda)}(x_o,t_o)}|Du|^p\,\dx\dt\approx \lambda^p.
\]
Thus, the geometry of the cylinder is determined by the size of the gradient of the solution on the cylinder itself.
This intrinsic geometry allows one to obtain Calderón--Zygmund estimates directly from PDE arguments, without relying on harmonic-analysis techniques.

Duzaar, Mingione, and Steffen~\cite{Du-Mi-St} extended this approach to more general nonlinear systems with $p$-growth in the superquadratic regime, while Scheven \cite{Scheven-1} further developed the theory in the singular subquadratic case. More precisely, they considered systems of the form
\begin{equation}\label{sys:general-p-growth}
\partial_t u-\Div \sfA(x,t,Du)
=
\Div\bigl(|\sfF|^{p-2}\sfF\bigr),
\qquad p>\frac{2n}{n+2}.
\end{equation}
satisfying ellipticity conditions, as well as a continuity condition in $x$ normalized by the natural $p$-growth, and differentiablity assumption with respect the gradient variable. It worth mentioning the authors also treated system as in~\eqref{sys:general-p-growth} with VMO-coefficients with respect to the spatial variable.  The estimates established in these works can be stated as follows: there exists $\varepsilon>0$ such that
\[
\sfF \in L^q_{\mathrm{loc}}(\Omega_T,\R^{Nn})
\quad\Longrightarrow\quad
Du \in L^q_{\mathrm{loc}}(\Omega_T,\R^{Nn}),
\qquad p \le q < p + \frac{4}{n} + \varepsilon.
\]

While a substantial theory is available for divergence-form data, the case of non-divergence-form right-hand sides presents additional difficulties, as the datum no longer possesses the derivative structure that can be directly
exploited in the weak formulation. For parabolic systems with non-divergence-form right-hand sides, optimal Calderón--Zygmund estimates are known in the linear case $p=2$, see \cite{By-Ki-Ku, Du-Mi-pot}. In the nonlinear case, sharp Calderón--Zygmund estimates under VMO assumptions on the
coefficients were obtained in \cite{ABDM}, where the authors considered systems of the form
\[
\partial_t u-\Div\bigl(\sfa(x,t)|Du|^{p-2}Du\bigr)
=\sff,
\qquad p>\frac{2n}{n+2}.
\]
They established the sharp Calderón--Zygmund estimate
\[
\sff\in
L^{\frac{(n+2)s}{n(p-1)+p+s}}_{\mathrm{loc}}
(\Omega_T,\mathbb{R}^{Nn})
\quad\Longrightarrow\quad
Du\in L^s_{\mathrm{loc}}(\Omega_T,\mathbb{R}^{Nn}),
\qquad s>p,
\]
which recovers the optimal exponent in the linear case $p=2$.

Building on the developments in \cite{ABDM,Du-Mi-St,Scheven-1}, we establish Calderón--Zygmund estimates for a broader class of nonlinear parabolic systems with polynomial $p$-growth and non-divergence-form right-hand sides, without requiring differentiability of the coefficients with respect to the gradient variable. Moreover, our approach provides a unified treatment of the singular case $p<2$ and the degenerate case $p\geq2$. Our main result is the following:

\begin{theorem}\label{thm:main}
Let $p>\frac{2n}{n+2}$. Suppose that
\[
u\in C\big((0,T);L^2_{\mathrm{loc}}(\Omega,\mathbb{R}^N)\big)\cap 
L^p_{\mathrm{loc}}\big(0,T;W^{1,p}_{\mathrm{loc}}(\Omega,\mathbb{R}^N)\big)
\]
is a local weak solution to the parabolic system~\eqref{par-sys} in the sense of Definition~\ref{def:weak_solution} when~\eqref{bounds-A}, and either~\eqref{upper-bound-A} or~\eqref{eq:A-ea}-\eqref{eq:e-VMO} are in force. 

Then, there exists $\varepsilon = \varepsilon (n,N,p,\nu,L)>0$ such that
\[
|\sff| \in L^{\frac{(n+2)s}{n(p-1) + p+ s}}_{\mathrm{loc}}(\Omega_T) \implies |Du|\in L^s_{\mathrm{loc}}(\Omega_T),
\]
whenever $s \in (p,  p + \frac{4}{n} + \varepsilon)$.
Moreover, there exists a constant 
$
c = c(n,N,p,\nu,L,s,\omega(\cdot))
$
such that for every parabolic cylinder $Q_R\Subset \Omega_T$  with $R \in (0,1]$, we have
\begin{align*}
    \bigg[\biint_{Q_{\frac12 R}} &|D u|^s  \, \dx\dt\bigg]^{\frac1s} \\
    &\leq 
    c\Bigg[
    \biint_{Q_R}|Du|^p\,\dx\dt
    +
    \bigg(
    \biint_{Q_R}|R\sff|^{\frac{(n+2)s}{n(p-1)+ p + s }}\,\dx\dt + 1
    \bigg)^{\frac{p}{s} + \frac{p}{n(p-1)+p}} \Bigg]^\frac{d}{p}, 
\end{align*}
where the
parabolic scaling deficit is defined by
\begin{align}\label{eq:deficit}
    1
    \le 
    d
    :=
    \left\{
        \begin{array}{cl}
            \frac{p}{2}, & \mbox{if $p\ge 2$,} \\[5pt]
            \frac{2p}{p(n+2)-2n}, & \mbox{if $\frac{2n}{n+2}<p<2$.}
        \end{array} 
    \right.
\end{align}
\end{theorem}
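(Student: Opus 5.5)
The plan is to follow the Acerbi--Mingione intrinsic-cylinder approach, in the form adapted to non-divergence data in \cite{ABDM} and to general $p$-growth fields in \cite{Du-Mi-St,Scheven-1}. First I handle the datum $\sff$. Using the Sobolev--Poincaré inequality on intrinsic cylinders together with the energy estimate, I test the weak formulation with $(u-(u)_{Q})\zeta^p\eta$. The term $\int \sff\cdot(u-(u)_Q)$ is then controlled by Hölder's inequality with exponent $\gamma:=\frac{(n+2)s}{n(p-1)+p+s}$. This exponent is exactly the one for which the parabolic Sobolev embedding of $u-(u)_Q$ closes the scaling. The estimate produces a Caccioppoli inequality and a reverse Hölder inequality (Gehring-type) on intrinsic cylinders $Q^{(\lambda)}_\rho$. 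In these, $\sff$ enters only through a quantity of the form
\[
\Big(\biint |\rho\sff|^{\gamma}\Big)^{1/\gamma},
\]
suitably rescaled by $\lambda$. This gives the base higher integrability $Du\in L^{p+\epsilon_0}$.

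Second, I carry out the comparison argument. On each intrinsic cylinder where $\biint|Du|^p\approx\lambda^p$, I compare $u$ with the solution $v$ of the homogeneous problem $\partial_t v-\Div \sfA(x_o,t,Dv)=0$, with coefficients frozen in $x$ (either by \eqref{upper-bound-A} or by the VMO-type condition \eqref{eq:A-ea}--\eqref{eq:e-VMO}), and with $v=u$ on the parabolic boundary. Since $\sfA$ is not assumed differentiable in the gradient, I cannot use the $C^{1,\alpha}$ gradient regularity of \cite{Du-Mi-St}. Instead I use only the higher integrability of $Dv$, with exponent $p+\frac4n+\epsilon$ in the interior. This is where the restriction $s<p+\frac4n+\varepsilon$ comes from: the $L^\infty$ comparison in \cite{Acerbi-Min} is replaced by an $L^{s_0}$ reverse Hölder bound for $Dv$ with $s_0$ slightly larger than $s$. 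The comparison estimate $\biint|V(Du)-V(Dv)|^2$ (or $|Du-Dv|^p$) is bounded by $\omega(\rho)$ times $\lambda^p$, plus the $\sff$-contribution tested against $u-v$. That contribution is handled by the same Hölder/Sobolev step with exponent $\gamma$, exploiting the smallness of the excess $\biint|\rho \sff|^\gamma$ relative to $\lambda$.

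Third, I run the stopping-time/Vitali covering on level sets. For $\lambda\ge \lambda_0$, where
\[
\lambda_0^{p/d}\approx \biint_{Q_R}|Du|^p+\Big(\biint_{Q_R}|R\sff|^\gamma+1\Big)^{\frac ps+\frac p{n(p-1)+p}},
\]
I construct exit-time intrinsic cylinders around points of $\{|Du|>\lambda\}$. The scaling deficit $d$ arises here from choosing the radii relative to $\lambda$ in the two regimes $p\ge2$ and $p<2$. I then obtain the good-$\lambda$ decay
\[
|\{|Du|>A\lambda\}\cap Q_{R/2}|\le \epsilon|\{|Du|>\lambda\}|+\text{(datum level set)}.
\]
Multiplying by $\lambda^{s-1}$, integrating, and absorbing for small $\epsilon$ (after a truncation $|Du|\wedge k$ to keep quantities finite) yields the stated estimate. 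The final inequality follows by the standard rescaling to $R\in(0,1]$.

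The main obstacle I expect is the comparison step without differentiability. I must obtain enough integrability of $Dv$ uniformly in intrinsic geometry, simultaneously for $p<2$ and $p\ge 2$, while also keeping the non-divergence term compatible with the intrinsic scaling. I would first try to deduce it from a Gehring lemma for $v$ applied on sub-intrinsic cylinders, as in \cite{Kinnunen-Lewis:1}, tracking constants to depend only on $(n,N,p,\nu,L)$.
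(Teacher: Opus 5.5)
Your outline has the right skeleton (frozen comparison, exit-time intrinsic cylinders, Vitali covering, layer-cake integration with truncation). However, the ingredient specific to non-divergence data is missing. As written, your exit time involves only $|Du|$, and nothing in the construction produces the ``smallness of the excess'' of $\sff$ or the ``datum level set'' you invoke.

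Via the parabolic Gagliardo--Nirenberg inequality, the datum error in the comparison is $\big[\iint_{Q}|\sff|^{q'}\big]^{\frac{p(n+2)-n}{n(p-1)+p}}$ with $q'=\frac{p(n+2)}{p(n+2)-n}=\Theta(p)$. So the energy step closes at the lower endpoint, not at $\Theta(s)$ as you claim. The paper puts $\mathsf M^p|Q|^{-1}$ times this quantity into the stopping functional. This makes the datum error on $2^5Q_i$ at most $\mathsf M^{-p}\lambda^p|2^5Q_i|$.

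Even then, the quantity is a superlinear power of a local integral, not the integral over $Q_i$ of a pointwise function. It therefore cannot be charged to a superlevel set and integrated against $\lambda^{s-p-1}$. The missing idea is the H\"older splitting $|\sff|^{q'}=|\sff|^{\vartheta}|\sff|^{q'-\vartheta}$ with exponents $\eta=\frac{p(n+2)-n}{p}$, $\eta'$, and $\vartheta\eta=\alpha s=\Theta(s)$, $(q'-\vartheta)\eta'=\alpha p$. This gives
\[
\Big[\iint_{Q_i}|\sff|^{q'}\Big]^{\frac{p(n+2)-n}{n(p-1)+p}}\le \mathsf F^p\iint_{Q_i}|\sff|^{\alpha p},
\qquad
\mathsf F^{n(p-1)+p}:=\iint_{Q_R}|\sff|^{\Theta(s)} .
\]
The right-hand side is additive over the disjoint $Q_i$ and splits at the level $|\sff|^{\alpha}>\lambda/(4\mathsf M\mathsf F)$. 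Integrated in $\lambda$, it gives a multiple of $\mathsf M^s\mathsf F^s\iint_{Q_R}|\sff|^{\Theta(s)}$. This is what produces the power $\frac ps+\frac p{n(p-1)+p}$ of the datum in the final estimate. Your Step 1 (a reverse H\"older inequality for $u$ with datum) does not substitute for this, and the paper never needs it.

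Second, the range $s<p+\frac4n+\varepsilon$. A Gehring argument as in \cite{Kinnunen-Lewis:1}, which you propose for $Dv$, only gives some $\sigma>p$ close to $p$. That proves the theorem only for $s<\sigma$. The threshold $p+\frac4n$ comes from second-order (difference-quotient) estimates combined with the parabolic embedding. These are available because the frozen field does not depend on $x$. This is Lemma~\ref{lem:HI}, quoted from \cite{Du-Mi-St,Scheven-1}, and the paper observes it needs only \eqref{bounds-A}, not differentiability in the gradient.

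Kinnunen--Lewis enters only in the VMO case. There, your claim that the comparison error is of order $\omega(\rho)\lambda^p$ is not justified: $|e(x)-e_i|$ is small only in mean, so the gradient multiplying it must be more than $L^p$-integrable. You could try an intrinsic reverse H\"older inequality for $u$, but that would again require the datum in the stopping functional. The paper avoids this with a two-step comparison:
\begin{itemize}
\item[(i)] $u$ is compared with $v_i$, solving the homogeneous problem with the same $e(x)$;
\item[(ii)] $v_i$ is compared with $w_i$, solving the problem with $e$ replaced by its average $e_i$.
\end{itemize}
Lemma~\ref{lem:HI-KL} applied to the datum-free $v_i$ turns the VMO condition into an error $\omega(R)^{(\sigma-p)/\sigma}\lambda^p|Q_i|$. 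Lemma~\ref{lem:HI} is then applied to $w_i$.
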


\begin{remark}
As a consequence of the higher-integrability theory developed in \cite{Du-Mi-St,Scheven-1}, our approach yields local $L^s$-estimates for $Du$ in the range
\[
p\leq s<p+\frac{4}{n}+\varepsilon.
\]
It is worth mentioning that the necessity of restricting the range of the Calderón--Zygmund exponent is also reflected already in the elliptic case in the counterexamples of \v{S}verák and Yan~\cite{Sverak-Yan}, who constructed non-Lipschitz minimizers of smooth uniformly convex functionals. Their example indicates that the result without an upper bound for $s$ can be only expected under stronger structural assumptions on the vector field $\sfA$.
\end{remark}

Notice that the quantity $d$ appearing in Theorem~\ref{thm:main}, referred to as the \emph{parabolic scaling deficit}, naturally reflects the non-homogeneous space-time scaling of parabolic problems with $p$-growth when $p\neq 2$. In Theorem~\ref{thm:main}, we assume that the integrability exponent satisfies $s>p$ and that
\[
|\sff|\in
L^{\Theta(s)}_{\mathrm{loc}}(\Omega_T),
\qquad
\Theta(s):=
\frac{(n+2)s}{n(p-1)+p+s},
\]
which yields
\[
|Du|\in L^s_{\mathrm{loc}}(\Omega_T).
\]
Observe that $\Theta$ is increasing in $s$. Moreover, for
$p<s<p^\sharp$, we have
\[
\lim_{s\to p^\sharp}\Theta(s)
=
\frac{(n+2)p^\sharp}
{n(p-1)+p+p^\sharp},
\]
while
\[
\lim_{s\to p}\Theta(s)
=
\frac{(n+2)p}
{n(p-1)+2p}.
\]

Unlike in Theorem~\ref{thm:main}, we may set $q:=\Theta(s) $ and formulate the resulting gain of integrability directly in terms of $q$. This yields the following equivalent formulation of Theorem~\ref{thm:main}.

\begin{theorem}\label{thm:equiv}
    Let $p>\frac{2n}{n+2}$. Assume that
\[
u\in C\big((0,T);L^2_{\mathrm{loc}}(\Omega,\mathbb{R}^N)\big)\cap 
L^p_{\mathrm{loc}}\big(0,T;W^{1,p}_{\mathrm{loc}}(\Omega,\mathbb{R}^N)\big)
\]
is a local weak solution to the parabolic system~\eqref{par-sys} in the sense of Definition~\ref{def:weak_solution} when~\eqref{bounds-A}, and either~\eqref{upper-bound-A} or~\eqref{eq:A-ea}-\eqref{eq:e-VMO} are in force.  Then, there exists $p^\sharp > p+ \frac{4}{n}$ depending only on $n,N,p,\nu$ and $L$, such that
    $$
|\sff| \in L^q_{\loc}(\Omega_T) \implies |Du| \in L^\frac{q(n(p-1) +p)}{n+2-q}_{\loc}(\Omega_T)
$$
whenever $\frac{(n+2)p}{n(p-1) + 2p}< q < \frac{(n+2) p^\sharp}{n(p-1) + p + p^\sharp}$. Moreover, there exists a constant $c = c(n,N,p,\nu,L,q, \omega(\cdot))$ such that for every parabolic cylinder $Q_R \Subset \Omega_T$ with $R \in (0,1]$, we have
\begin{align*}
    \bigg[\biint_{Q_{\frac12 R}} &|D u|^\frac{q(n(p-1)+p)}{n+2-q}  \, \dx\dt\bigg]^{\frac{n+2-q}{q(n(p-1)+p)}} \\
    &\leq 
    c\Bigg[
    \biint_{Q_R}|Du|^p\,\dx\dt
    +
    \bigg(
    \biint_{Q_R}|R\sff|^{q}\,\dx\dt + 1
    \bigg)^{\frac1q \cdot\frac{p(n+2)}{n(p-1)+p}}\Bigg]^\frac{d}{p}.
\end{align*}
\end{theorem}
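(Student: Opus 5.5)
Theorem~\ref{thm:equiv} follows from Theorem~\ref{thm:main} by the change of variables $q=\Theta(s)$. No new analytic input is needed: we only check that $\Theta$ is an increasing bijection between the relevant intervals and that the exponents in the estimate transform correctly.

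\textbf{Inverting $\Theta$.} From $q(n(p-1)+p+s)=(n+2)s$ we get $s(n+2-q)=q(n(p-1)+p)$, that is,
\[
s=\frac{q(n(p-1)+p)}{n+2-q}.
\]
The denominator is positive because $\Theta(s)<n+2$ for every finite $s$. The derivative
\[
\Theta'(s)=\frac{(n+2)(n(p-1)+p)}{(n(p-1)+p+s)^2}
\]
is positive, so $\Theta$ is strictly increasing and continuous. Setting $p^\sharp:=p+\frac4n+\varepsilon$, with $\varepsilon$ from Theorem~\ref{thm:main}, $\Theta$ maps $(p,p^\sharp)$ bijectively onto
\[
\Big(\tfrac{(n+2)p}{n(p-1)+2p},\ \tfrac{(n+2)p^\sharp}{n(p-1)+p+p^\sharp}\Big).
\]
Hence for $q$ in the stated range the corresponding $s$ lies in $(p,p+\frac4n+\varepsilon)$. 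Since $\varepsilon>0$, we have $p^\sharp>p+\frac4n$, and $p^\sharp$ depends only on $n,N,p,\nu,L$.

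\textbf{Transforming the estimate.} Applying Theorem~\ref{thm:main} with this $s$ gives $|Du|\in L^s_{\loc}$, and the left-hand exponent $\frac1s$ becomes $\frac{n+2-q}{q(n(p-1)+p)}$. On the right, the data term is raised to the power
\[
\frac ps+\frac{p}{n(p-1)+p}=\frac{p(n+2-q)}{q(n(p-1)+p)}+\frac{pq}{q(n(p-1)+p)}=\frac{p(n+2)}{q(n(p-1)+p)},
\]
which is exactly $\frac1q\cdot\frac{p(n+2)}{n(p-1)+p}$. The constant depends on $s$, hence on $q$, together with $n,N,p,\nu,L,\omega(\cdot)$.

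The only step needing any care is confirming this exponent identity; everything else is bookkeeping.
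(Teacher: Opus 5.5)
Your proposal is correct and follows the paper's route: Theorem~\ref{thm:equiv} is just Theorem~\ref{thm:main} rewritten via the increasing bijection $q=\Theta(s)$, with $p^\sharp=p+\frac4n+\varepsilon$. Your explicit check that $\frac ps+\frac{p}{n(p-1)+p}=\frac1q\cdot\frac{p(n+2)}{n(p-1)+p}$ supplies the exponent bookkeeping that the paper leaves implicit.
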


In Theorem~\ref{thm:equiv}, we suppose that
$|\sff|\in L^q_{\mathrm{loc}}(\Omega_T)$, which implies that
$|Du|\in L^s_{\mathrm{loc}}(\Omega_T)$, where
\[
s:=\frac{q[n(p-1)+p]}{n+2-q}.
\]
Note that the map $q\mapsto \frac{q[n(p-1)+p]}{n+2-q}$ is increasing on $(0,n+2)$ and 
\[
\lim_{q\to\frac{(n+2)p}{n(p-1)+2p}}
\frac{q[n(p-1)+p]}{n+2-q}
=p,
\]
while
\[
\lim_{q\to
\frac{(n+2)p^\sharp}{n(p-1)+p+p^\sharp}}
\frac{q[n(p-1)+p]}{n+2-q}
=p^\sharp.
\]
Hence $p< s < p^{\sharp}$, then we recover the Theorem~\ref{thm:main}.

 The proofs are based on comparison arguments. Solutions of the problem~\eqref{par-sys} are compared with solutions of the homogeneous problem with frozen coefficients in which the vector field is independent of the spatial variable.  The gradient of solutions to such problems, actually, when the vector field is differentiable in space, are known by the results in~\cite{Du-Mi-St,Scheven-1,Scheven-2}, to be integrable to a power strictly greater than $p + \frac{4}{n}$. This higher gradient integrability property is then transferred to the solution of~\eqref{par-sys}, whenever the right-hand side $\sff$ allows. 

For vector fields satisfying weak type continuity assumption~\eqref{upper-bound-A} in space, we compare the solution of~\eqref{par-sys} directly with the frozen homogeneous problem obtained by fixing the first variable in $\sfA(x,t,\xi) $. We also consider the case when $\sfA$ is of separable form as in~\eqref{eq:A-ea}, with VMO coefficients in space. In this case, we exploit comparison arguments in two subsequent steps: first, by comparing the original solution to the solution of the homogeneous problem, and subsequently, the latter to the solution of the frozen problem. In order to deal with coefficients with VMO-type behavior, we exploit the general higher integrability result from~\cite{Kinnunen-Lewis:1}.

\section{Preliminaries}

\subsection{Notation}

We denote points in space-time by $z_o=(x_o,t_o)\in\mathbb{R}^n\times\mathbb{R}$. 
For $\rho>0$, we write $B_\rho(x_o)\subset\mathbb{R}^n$ for the open ball centered at $x_o$ with radius $\rho$, and simply $B_\rho$ if $x_o$ is clear from the context. For $\theta > 0$, we denote the cylinder  centered at $z_o=(x_o,t_o)$ by
\[
Q_{\rho,\theta} (z_o) = B_\rho(x_o)\times
\bigl(t_o-\theta,\;t_o+\theta \bigr),
\]
and for $\lambda > 0$, we introduce the intrinsic parabolic cylinder
\[
Q_\rho^{(\lambda)}(z_o)
:= Q_{\rho,\lambda^{2-p}\rho^2} (z_o).
\]
When $\lambda=1$, the superscript is omitted, and we recover the standard parabolic cylinders $Q_\rho = Q_{\rho,\rho^2}$. Moreover, whenever the center $z_o$ is clear from the context, we omit $z_o$ from the notation $Q_{\rho,\theta} (z_o)$.

\subsection{Notion of weak solutions}
In what follows, we introduce the notion of weak solutions, which will be used throughout the paper.

\begin{definition}\label{def:weak_solution}\upshape
Assume that $\sfA:\Omega_T \times \R^{Nn} \to\R^{Nn}$ satisfies \eqref{bounds-A}, and that $\sff\in L^1_{\mathrm{loc}}(\Omega_T,\R^N)$. 
A function
\[
u\in C\big((0,T); L^2_{\mathrm{loc}}(\Omega,\R^N)\big)\cap
L^p_{\mathrm{loc}}\big(0,T;W^{1,p}_{\mathrm{loc}}(\Omega,\R^N)\big)
\]
is called a local weak solution to \eqref{par-sys} if
\begin{align}\label{weak-solution}
	\iint_{\Omega_T}\big[- u\cdot \partial_t \varphi + \sfA(x,t, Du)\cdot D\varphi\big]\dx\dt
    =
   \iint_{\Omega_T}\sff\cdot\varphi\,\dx\dt
\end{align}
holds for every test function $\varphi\in C_0^\infty(\Omega_T,\R^N)$.
\hfill$\Box$
\end{definition}

\subsection{Structural conditions on the vector field $\sfA$}\label{structural-conditions}
In this section, we formulate the structural assumptions imposed on the nonlinear vector field $\sfA$. 
\begin{equation} \label{bounds-A}
\left\{
\begin{aligned}
&|\sfA(x,t,w)|
\le
L(\mu^2+|w|^2)^{\frac{p-1}{2}},
\\[4pt]
&  (\sfA(x,t,w) - \sfA(x,t, \widetilde w)) \cdot (w- \widetilde w)
\ge
\nu(\mu^2+|w|^2 + |\widetilde w|^2)^{\frac{p-2}{2}}
|w-\widetilde w|^2,
\\[4pt]
&|\sfA (x,t,w) - \sfA(x,t, \widetilde w)| \leq L (\mu^2 + |w|^2 + |\widetilde w|^2)^\frac{p-2}{2} |w - \widetilde w| \quad \text{ if }\,  p \geq 2,
\end{aligned}
\right.
\end{equation}
for any $x, x_o \in \Omega$, $t \in (0,T)$, and $w , \widetilde w \in \R^{Nn}$, where $p > \frac{2n}{n+2}$, $0 < \nu \leq 1 \leq L < \infty$ and $0 \leq \mu \leq 1$. The bound~\eqref{bounds-A}$_3$ is assumed to obtain the higher integrability result in the superquadratic case, Lemma~\ref{lem:HI}, which applies~\cite{Du-Mi-St}.

Furthermore, we need a weak continuity type assumption in the spatial variable. We consider two alternative conditions. First, we suppose that there exists a concave modulus of continuity $\omega : [0,\infty) \to [0,2) $ satisfying
$$
\lim_{R \to 0} \omega (R ) = 0 = \omega (0),
$$
such that
\begin{equation} \label{upper-bound-A}
|\sfA(x,t,w)-\sfA(x_o,t,w)|
\le
L\omega(|x-x_o|)
(\mu^2+|w|^2)^{\frac{p-1}{2}}.
\end{equation}
for every $x, x_o \in \Omega$, $t\in (0, T)$ and $w \in \mathbb{R}^{Nn}$. Alternatively, we consider the case
\begin{equation} \label{eq:A-ea}
\sfA (x,t,\xi) = e(x) \sfa(t,\xi), \quad \text{ where $\sfa$ satisfies~\eqref{bounds-A}},
\end{equation}
and
\begin{equation} \label{eq:e-uniform-ell}
0<\nu \leq e(x) \leq L < \infty \quad \text{ for a.e. } x \in \Omega,
\end{equation}
and  $e \in \mbox{VMO} (\Omega)$, that is, 
\begin{equation} \label{eq:e-VMO}
\lim_{R\to 0}\omega(R) = 0, \quad 
\end{equation}
 where 
 \[
 \, \omega(R) := \sup_{\substack{B_\rho \Subset \Omega\\ 0<\rho\le R}} \bint_{B_\rho} |e(x) - (e)_{B_\rho}| \, \d x.
 \]
\subsection{ Auxiliary results}

For the sake of completeness, we state standard algebraic estimates, see, for instance,  \cite{Acerbi-Min-p(x),BDM-obst} and \cite{BDLM-1}.
\begin{lemma} \label{lem:tech}
Let $k \in \N$, $p > 1$ and $\mu \in [0,1]$. Then, there exists a constant $c = c(k,p)$ such 
$$
(\mu^2 + |a|^2)^\frac{p}{2} \leq c (\mu^2 + |b|^2)^\frac{p}{2} + c (\mu^2 +|a|^2 + |b|^2)^\frac{p-2}{2} |b-a|^2 
$$
for every $a,b \in \R^k$.
\end{lemma}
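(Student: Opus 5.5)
We argue separately in the regimes $p\ge 2$ and $1<p<2$, after two reductions. First, the roles of $a$ and $b$ in the weight are symmetric. Second, the claim is trivial when $|a|\le 2|b|$: then $\mu^2+|a|^2\le 4(\mu^2+|b|^2)$, hence $(\mu^2+|a|^2)^{p/2}\le 2^p(\mu^2+|b|^2)^{p/2}$. So from now on we assume $|a|> 2|b|$. In that case $|b-a|\ge |a|-|b|\ge \tfrac12|a|$ and $|b-a|\le \tfrac32|a|$. Moreover,
\[
\mu^2+|a|^2\le \mu^2+|a|^2+|b|^2\le \tfrac54(\mu^2+|a|^2),
\]
so the weight satisfies $(\mu^2+|a|^2+|b|^2)^{\frac{p-2}{2}}\approx (\mu^2+|a|^2)^{\frac{p-2}{2}}$ with constants depending only on $p$. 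This equivalence holds in both directions, so it covers $p\ge2$ and $p<2$ alike.

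Within the case $|a|>2|b|$ we split once more.

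\emph{Subcase $|a|^2\ge \mu^2$.} Here $\mu^2+|a|^2\le 2|a|^2\le 8|b-a|^2$. Therefore
\[
(\mu^2+|a|^2)^{\frac p2}=(\mu^2+|a|^2)^{\frac{p-2}{2}}(\mu^2+|a|^2)\le c\,(\mu^2+|a|^2+|b|^2)^{\frac{p-2}{2}}|b-a|^2,
\]
using the weight equivalence above.

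\emph{Subcase $|a|^2<\mu^2$.} Then $|b|<|a|<\mu$, so $\mu^2+|a|^2\le 2\mu^2\le 2(\mu^2+|b|^2)$. This gives $(\mu^2+|a|^2)^{p/2}\le 2^{p/2}(\mu^2+|b|^2)^{p/2}$ directly.

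Combining the cases yields the inequality with $c=c(p)$. The constant is independent of $k$ because only Euclidean norms enter, so the dependence on $k$ in the statement is harmless.

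The only delicate point is keeping the weight comparison two-sided, so that the argument also works for $p<2$, where the exponent $\frac{p-2}{2}$ is negative. This is exactly why we reduce to $|a|>2|b|$: under that assumption the full weight $\mu^2+|a|^2+|b|^2$ is comparable to $\mu^2+|a|^2$ from above and below. When $\mu=0$ and $a=b=0$, both sides vanish, so the degenerate case causes no trouble.
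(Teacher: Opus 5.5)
Your proof is correct. The split into $|a|\le 2|b|$, where $\mu^2+|a|^2\le 4(\mu^2+|b|^2)$, and $|a|>2|b|$, where $|b-a|\ge\frac12|a|$ and $\mu^2+|a|^2\le\mu^2+|a|^2+|b|^2\le\frac54(\mu^2+|a|^2)$, followed by $|a|\ge\mu$ versus $|a|<\mu$, covers all cases. The two-sided weight comparison is exactly what makes it work for $p<2$. You get the explicit constant $c=\max\{2^p,\,8(5/4)^{(2-p)_+/2}\}$, which depends only on $p$. Your opening remark about the symmetry of the weight is never used and can be dropped.

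The paper itself gives no proof of this lemma; it only cites the literature. There, estimates of this kind are usually derived from the equivalence $|V_\mu(a)-V_\mu(b)|^2\approx(\mu^2+|a|^2+|b|^2)^{\frac{p-2}{2}}|a-b|^2$ for $V_\mu(a)=(\mu^2+|a|^2)^{\frac{p-2}{4}}a$. One writes $|V_\mu(a)|^2\le 2|V_\mu(b)|^2+2|V_\mu(a)-V_\mu(b)|^2$ and uses the same dichotomy: if $|a|\ge\mu$ then $(\mu^2+|a|^2)^{p/2}\le 2|V_\mu(a)|^2$, and if $|a|<\mu$ the bound by $(\mu^2+|b|^2)^{p/2}$ is immediate.

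That route is a two-line consequence once the $V$-function equivalence is available, and it fits the framework in which such quantities usually appear. However, the equivalence itself needs a genuine argument, typically via $\int_0^1(\mu^2+|b+t(a-b)|^2)^{\frac{p-2}{2}}\,\d t\approx(\mu^2+|a|^2+|b|^2)^{\frac{p-2}{2}}$. Your case analysis avoids all of this: it is fully elementary and self-contained, and it shows that the dependence of $c$ on $k$ allowed in the statement is unnecessary.
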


\begin{lemma} \label{lem:elementary-superlevel}
Let $k \in \N$, $\alpha \in [1,\infty)$, $a,b \in \R^k$ and $|a| \geq K$ for some $K > 0$. Then 
for every $\beta \in [\alpha,\infty)$
we have
\begin{equation*}
    |a|^\alpha \leq 2^\alpha |a-b|^\alpha + 2^{\beta -1} K^{\alpha -\beta} |b|^\beta.
\end{equation*}
\end{lemma}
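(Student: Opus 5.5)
The plan is to reduce the lemma to a one-variable inequality by normalizing by $|a|$, and then to settle that inequality with Bernoulli's inequality, splitting into two cases.

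\emph{First reduction.} If $|a-b|\ge \tfrac12|a|$, then $|a|^\alpha\le 2^\alpha|a-b|^\alpha$ and we are done, because the second term on the right is nonnegative. So assume $|a-b|<\tfrac12|a|$, and write $t:=|a-b|/|a|\in[0,\tfrac12)$. The triangle inequality gives $|b|\ge(1-t)|a|$. Since $\beta\ge\alpha$ and $|a|/K\ge 1$,
\[
K^{\alpha-\beta}|b|^\beta\ge K^{\alpha-\beta}|a|^\beta(1-t)^\beta=|a|^\alpha\Big(\frac{|a|}{K}\Big)^{\beta-\alpha}(1-t)^\beta\ge |a|^\alpha(1-t)^\beta .
\]
Dividing by $|a|^\alpha$, it suffices to show
\[
1\le (2t)^\alpha+2^{\beta-1}(1-t)^\beta .
\]
Put $s:=2t\in[0,1]$. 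Then $2^{\beta-1}(1-t)^\beta=\tfrac12(2-s)^\beta$. Since $2-s\ge1$ and $\beta\ge\alpha$, we have $(2-s)^\beta\ge(2-s)^\alpha$. It therefore remains to prove
\[
g(s):=s^\alpha+\tfrac12(2-s)^\alpha\ge 1\qquad\text{for } s\in[0,1],\ \alpha\ge1 .
\]

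\emph{The one-variable inequality.} This is the only step that needs care, since the constant $2^{\beta-1}$ rules out the crude bound $|a|\le 2|b|$, which would only yield $2^\beta$. Bernoulli's inequality for $\alpha\ge1$ gives $(2-s)^\alpha=(1+(1-s))^\alpha\ge 1+\alpha(1-s)$. We split according to the size of $\alpha(1-s)$.
\begin{itemize}
\item If $\alpha(1-s)\ge1$, then $(2-s)^\alpha\ge2$, so the second term of $g$ alone is at least $1$.
\item If $\alpha(1-s)<1$, we also use Bernoulli in the form $s^\alpha=(1-(1-s))^\alpha\ge 1-\alpha(1-s)$, valid since $1-s\in[0,1]$. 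Adding the two bounds,
\[
g(s)\ge 1-\alpha(1-s)+\tfrac12+\tfrac12\alpha(1-s)=\tfrac32-\tfrac12\alpha(1-s)>1 .
\]
\end{itemize}
In both cases $g\ge1$, which closes the second case of the reduction and proves the lemma.
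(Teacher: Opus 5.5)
Your proof is correct. The reduction to $1\le (2t)^\alpha+2^{\beta-1}(1-t)^\beta$ uses $|a|\ge K>0$ and $\beta\ge\alpha$ legitimately, and replacing $(2-s)^\beta$ by $(2-s)^\alpha$ is justified by $2-s\ge 1$. Both applications of Bernoulli's inequality are valid for $\alpha\ge1$: the first with $x=1-s\in[0,1]$, the second with $x=-(1-s)\in[-1,0]$.

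The paper does not prove this lemma; it only cites the literature. The usual argument is shorter and needs no one-variable inequality. Split according to whether $|b|\le\frac12|a|$ or $|b|>\frac12|a|$. The first case is your first step. In the second case, convexity of $r\mapsto r^\alpha$ gives
\[
|a|^\alpha\le\bigl(|a-b|+|b|\bigr)^\alpha\le 2^{\alpha-1}|a-b|^\alpha+2^{\alpha-1}|b|^\alpha .
\]
Since $|b|>\frac12|a|\ge\frac12 K$ and $\alpha-\beta\le 0$, we have $|b|^\alpha\le (K/2)^{\alpha-\beta}|b|^\beta=2^{\beta-\alpha}K^{\alpha-\beta}|b|^\beta$. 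Hence the last term is at most $2^{\beta-1}K^{\alpha-\beta}|b|^\beta$.

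So you are right that the crude bound $|a|\le 2|b|$ only gives $2^\beta$. The missing factor $\frac12$, however, comes back for free by keeping $|a-b|$ inside the convexity inequality. In fact, your own second case already has $|b|\ge(1-t)|a|>\frac12|a|$, so you could have finished there in one line this way. Your route (normalizing by $|a|$, using $|a|\ge K$ rather than $|b|>K/2$, then a Bernoulli case split) is a valid self-contained alternative. It gives nothing beyond the stated inequality, though, and it is longer than the standard convexity argument.
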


In what follows, we recall a higher integrability result that was established by Duzaar, Mingione, and Steffen \cite[Lemma 5.9]{Du-Mi-St} in the superquadratic case under a differentiability assumption with respect to the gradient variable.  A careful inspection of the proof reveals that our structural conditions~\eqref{bounds-A}, which are implications of the ones in \cite{Du-Mi-St}, are sufficient for this result. The corresponding  result  in the subquadratic case  is due to Scheven \cite[Lemma 2.5]{Scheven-1} (see also~\cite{Scheven-2}). In addition, the referred higher integrability results in \cite[Lemma 5.9]{Du-Mi-St} and ~\cite[Lemma 2.5]{Scheven-1} apply for vector fields that are differentiable with respect to the spatial variable. For our purposes it is sufficient to apply it for vector fields independent of the spatial variable.

\begin{lemma}\label{lem:HI}
Let $p>\frac{2n}{n+2}$ and ${\sf K }\ge1$. Let 
\[
v\in C\big((0,T);L_{\mathrm{loc}}^2(\Omega,\R^N)\big)
\cap L_{\mathrm{loc}}^p\big(0,T;W_{\mathrm{loc}}^{1,p}(\Omega,\R^N)\big)
 \]
 be a weak solution to \eqref{par-sys}, where $\sfA$ is independent of the spatial variable, and satisfies the structural assumptions~\eqref{bounds-A} with $\sff=0$ in $\Omega_T$. Then there exists $\varepsilon=\varepsilon(n,N,p,L,\nu)>0$ such that for any 
\begin{equation*}
    p \le \gamma < p + \tfrac{4}{n} + \varepsilon,
\end{equation*}
there exists a constant
$c=c(n,N,p, L,\nu,{\sf K })\ge1$ with the following property. If $Q_\rho^{(\lambda)}(z_o)\Subset \Omega_T$ is a parabolic cylinder such that 
\[
\bigg[
\biint_{Q_\rho^{(\lambda)}(z_o)}
|Dv|^p\,\dx\dt\bigg]^\frac1p
\le {\sf K } \lambda \qquad 
\]
holds for some $\lambda \ge 1$, then
\[
\bigg[
\biint_{ Q^{(\lambda)}_{\rho / 2}(z_o)}
|Dv|^{\gamma}\,\dx\dt\bigg]^\frac{1}{\gamma}
\le
c \lambda.
\]
\end{lemma}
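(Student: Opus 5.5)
Essentially, this lemma is a restatement of \cite[Lemma~5.9]{Du-Mi-St} for $p\ge2$ and \cite[Lemma~2.5]{Scheven-1} for $\frac{2n}{n+2}<p<2$. My plan is therefore to reduce to a normalized situation and check that the cited proofs use only \eqref{bounds-A} and $x$-independence, not differentiability of $\sfA$ in $w$.

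\emph{Step 1 (rescaling).} I would rescale the intrinsic cylinder. Set
\[
v_\lambda(x,t):=\frac{v(x_o+\rho x,\,t_o+\lambda^{2-p}\rho^2 t)}{\rho\lambda},\qquad
\sfA_\lambda(t,w):=\lambda^{1-p}\sfA\big(t_o+\lambda^{2-p}\rho^2t,\lambda w\big).
\]
Then $v_\lambda$ is a weak solution of $\partial_t v_\lambda-\Div\sfA_\lambda(t,Dv_\lambda)=0$ on $Q_1$. The field $\sfA_\lambda$ satisfies \eqref{bounds-A} with the same $\nu,L$ and with $\mu$ replaced by $\mu/\lambda\le1$; this is where $\lambda\ge1$ is used. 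The hypothesis becomes $\biint_{Q_1}|Dv_\lambda|^p\le{\sf K}^p$, and the claim becomes $\|Dv_\lambda\|_{L^\gamma(Q_{1/2})}\le c$. So it suffices to prove a non-intrinsic estimate on the unit cylinder, with constants depending on ${\sf K}$ and uniform in $\mu\in[0,1]$.

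\emph{Step 2 (second-order regularity and the gain $\frac4n$).} Since $\sfA_\lambda$ has no $x$-dependence, I would test the Steklov-averaged weak formulation with $\Delta_{-h}(\eta^2\zeta\,\Delta_h v_\lambda)$, where $\Delta_h$ is a spatial difference quotient, $\eta$ a spatial cut-off and $\zeta$ a time cut-off. Only the monotonicity \eqref{bounds-A}$_2$ and the Lipschitz-type bound are needed to absorb terms. For $p\ge2$ that bound is \eqref{bounds-A}$_3$. For $p<2$, I would replace it by the standard estimate $|\sfA(w)-\sfA(\widetilde w)|\lesssim(\mu^2+|w|^2+|\widetilde w|^2)^{\frac{p-2}{4}}|V(w)-V(\widetilde w)|$, applied as in Scheven's argument. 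This should give the energy bounds
\[
\sup_t\int\eta^2|\Delta_h v_\lambda|^2\,\dx+\iint\eta^2\zeta|\Delta_h V(Dv_\lambda)|^2\,\dx\dt\le c({\sf K}),
\qquad V(w):=(\mu^2+|w|^2)^{\frac{p-2}{4}}w .
\]
Letting $h\to0$ yields $V(Dv_\lambda)\in L^2(W^{1,2})$ and $Dv_\lambda\in L^\infty(L^2)$ on $Q_{3/4}$. A Gagliardo--Nirenberg interpolation on each time slice between $|Dv_\lambda|^{p/2}\in W^{1,2}$ and $Dv_\lambda\in L^2$, integrated in time, then gives $Dv_\lambda\in L^{p+\frac4n}(Q_{3/4})$ with a bound depending on ${\sf K}$. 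For $p<2$ this step requires the restriction $p>\frac{2n}{n+2}$.

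\emph{Step 3 (the extra $\varepsilon$).} The estimates of Step 2 hold on every subcylinder, with the same structural constants. They therefore yield a reverse Hölder inequality for $|Dv_\lambda|^{p+\frac4n}$, with a lower exponent on the right-hand side, on all small (intrinsic) subcylinders. A parabolic Gehring lemma, in the intrinsic-cylinder form of Kinnunen--Lewis, then upgrades the integrability to $p+\frac4n+\varepsilon$ with $\varepsilon=\varepsilon(n,N,p,\nu,L)$. Hölder's inequality covers all $\gamma$ in the stated range. Undoing the scaling returns the estimate $c\lambda$.

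I expect the main obstacle to be Step 2 in the singular range without differentiability in $w$. There the cross terms from $\Delta_h\sfA$ must be controlled purely through \eqref{bounds-A}$_2$ and the growth bound in $V$-form. At the same time the constants must stay uniform in $\mu/\lambda\in[0,1]$, and the dependence on ${\sf K}$ must be made explicit through the cut-off terms. I would follow Scheven's proof line by line, replacing every use of $\partial_w\sfA$ by the corresponding monotonicity or difference inequality.
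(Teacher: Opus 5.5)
Your route is the paper's. The paper gives no argument beyond citing \cite[Lemma~5.9]{Du-Mi-St} and \cite[Lemma~2.5]{Scheven-1} and asserting that \eqref{bounds-A} suffices. Your Steps~1 and~3 are sound and use no differentiability of $\sfA$: the intrinsic rescaling with $\mu/\lambda\le 1$, and an intrinsic Gehring argument applied to the reverse H\"older inequality from $p$ to $p+\frac4n$.

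\emph{The gap is in Step~2 for $\frac{2n}{n+2}<p<2$.} There \eqref{bounds-A} gives no upper bound on $|\sfA(t,w)-\sfA(t,\widetilde w)|$. Indeed, \eqref{bounds-A}$_3$ is imposed only for $p\ge2$, \eqref{bounds-A}$_2$ is a lower bound, and \eqref{bounds-A}$_1$ only bounds $|\sfA|$ itself. Since $|V(w)-V(\widetilde w)|\approx(\mu^2+|w|^2+|\widetilde w|^2)^{\frac{p-2}{4}}|w-\widetilde w|$, the ``standard estimate'' you invoke is exactly the Lipschitz-type bound $|\sfA(t,w)-\sfA(t,\widetilde w)|\lesssim(\mu^2+|w|^2+|\widetilde w|^2)^{\frac{p-2}{2}}|w-\widetilde w|$. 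That bound is precisely what the differentiability hypotheses of \cite{Du-Mi-St,Scheven-1} provide and what \eqref{bounds-A} drops for $p<2$. It is not implied by \eqref{bounds-A}: for $N=n=1$ and $\mu=1$, the field $(1+w^2)^{\frac{p-2}{2}}w+\mathrm{sign}(w)\min\{|w|^{1/2},1\}$ satisfies \eqref{bounds-A}$_{1,2}$ but is not Lipschitz at $0$.

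Without this bound, the cut-off term $\iint\zeta\,\Delta_h[\sfA(t,Dv_\lambda)]\cdot(2\eta D\eta\otimes\Delta_h v_\lambda)$ can only be estimated through growth. That costs a factor $|h|^{-1}$ and yields only about half a spatial derivative of $V(Dv_\lambda)$, instead of $V(Dv_\lambda)\in L^2(W^{1,2})$. The $\frac4n$ gain is then lost, and Step~3 has nothing to start from. Following Scheven line by line does not help, because the standard difference-quotient argument uses this bound at exactly this place.

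\emph{A fix for $p<2$.} Do not estimate $\Delta_h\sfA$ at all. By discrete integration by parts (direction $e_s$), the cut-off term equals $-\iint\zeta\,\sfA(t,Dv_\lambda)\cdot\Delta_{-h}(2\eta D\eta\otimes\Delta_h v_\lambda)$, and $\Delta_{-h}\Delta_h v_\lambda(x)=\int_0^1\Delta_h\partial_s v_\lambda(x-\theta he_s)\,\mathrm{d}\theta$. Write $|\Delta_h Dv_\lambda|\le c\,|\Delta_h V(Dv_\lambda)|\,W_h^{\frac{2-p}{4}}$ with $W_h=\mu^2+|Dv_\lambda|^2+|Dv_\lambda(\cdot+he_s)|^2$. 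H\"older's inequality with exponents $\frac{p}{p-1}$, $2$, $\frac{2p}{2-p}$ (whose reciprocals sum to one) then bounds the main part by $c\,\|D\eta\|_\infty\,\|\zeta^{1/2}\eta\,\Delta_h V(Dv_\lambda)\|_{L^2}\,\|(\mu^2+|Dv_\lambda|^2)^{1/2}\|_{L^p}^{p/2}$, which Young's inequality absorbs. The remainders, coming from $\eta(x-he_s)-\eta(x-\theta he_s)$ and from $\Delta_{-h}(\eta D\eta)$, are of order $\iint(\mu^2+|Dv_\lambda|^2)^{p/2}\,\mathrm{d}x\,\mathrm{d}t$ times powers of $\|D\eta\|_\infty$ and $\|D^2\eta\|_\infty$. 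This uses only \eqref{bounds-A}$_{1,2}$. It needs $p\le 2$, since for $p>2$ the weight $W_h^{\frac{2-p}{4}}$ carries a negative power; this is consistent with the paper imposing \eqref{bounds-A}$_3$ only for $p\ge2$.

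\emph{A second point in the singular case.} The time cut-off puts $\iint|\partial_t\zeta|\eta^2|\Delta_h v_\lambda|^2$ on the right-hand side, i.e.\ essentially $\|Dv_\lambda\|_{L^2}^2$. The hypothesis $\biint_{Q_1}|Dv_\lambda|^p\,\mathrm{d}x\,\mathrm{d}t\le{\sf K}^p$ does not control this quantity. Interpolating it through the parabolic Sobolev embedding is exactly scale-critical: it returns the left-hand side to the first power with a non-small constant. So your claimed bound $\le c({\sf K})$ needs a preliminary argument giving $Dv_\lambda\in L^2$ quantitatively.
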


\begin{remark}
It is worth emphasizing that one of the novelties of our work is the weakening of the assumptions imposed in \cite{Du-Mi-St}, where differentiability with respect to the gradient variable is required. In our setting, we only assume the structural conditions \eqref{bounds-A}, which is a direct consequence of those structural conditions in \cite{Du-Mi-St}.
\end{remark}

For the treatment of VMO coefficients, we exploit another higher integrability result in a more general setting, that can be recovered from~\cite{Kinnunen-Lewis:1}.

\begin{lemma}\label{lem:HI-KL}
Let $p>\frac{2n}{n+2}$, ${\sf K} \ge1$, and suppose that 
\[
v\in C\big((0,T);L_{\mathrm{loc}}^2(\Omega,\R^N)\big)
\cap L_{\mathrm{loc}}^p\big(0,T;W_{\mathrm{loc}}^{1,p}(\Omega,\R^N)\big)
\]
is a weak solution to \eqref{par-sys} with $\sfA$ satisfying~\eqref{bounds-A} and $\sff=0$ in $\Omega_T$.
Then, there exist constants $\sigma=\sigma(n,N,p,\nu,L)>p$ and 
$c=c(n,N,p,\nu,L,{\sf K})\ge1$ with the following property: if $Q_\rho^{(\lambda)}(z_o) \Subset \Omega_T$ is a parabolic cylinder such that
\[
\bigg[
\biint_{Q_\rho^{(\lambda)}(z_o)}
|Dv|^p\,\dx\dt\bigg]^\frac1p
\le {\sf K} \lambda
\]
holds for some $\lambda \ge 1$, then
\[
\bigg[
\biint_{Q^{(\lambda)}_{\rho/2}(z_o)}
|Dv|^{\sigma}\,\dx\dt\bigg]^\frac{1}{\sigma}
\le
c \lambda.
\]
\end{lemma}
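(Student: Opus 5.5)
The plan is to run the Kinnunen--Lewis argument from \cite{Kinnunen-Lewis:1} directly on the cylinder $Q_\rho^{(\lambda)}(z_o)$, keeping track of how constants depend on ${\sf K}$ and $\lambda$. Only the structure \eqref{bounds-A}$_{1,2}$ with $\sff=0$ is used. No continuity in $x$ is needed, since Caccioppoli and Poincaré-type estimates are insensitive to it.

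\textbf{Energy estimates.} First I establish a Caccioppoli inequality on arbitrary cylinders $Q_{r,\theta}(w)\subset Q_\rho^{(\lambda)}(z_o)$. Testing \eqref{weak-solution} with $\varphi=\eta^p\zeta(v-(v)_{Q})$, after a Steklov averaging to justify the time derivative, and using \eqref{bounds-A}$_{1,2}$ with Young's inequality gives
\[
\sup_t\bint_{B_{r/2}}\frac{|v-(v)_Q|^2}{\theta}\,\dx+\biint_{Q_{r/2,\theta/4}}|Dv|^p\,\dx\dt
\le c\biint_{Q_{r,\theta}}\Big[\frac{|v-(v)_Q|^p}{r^p}+\frac{|v-(v)_Q|^2}{\theta}+\mu^p\Big]\dx\dt .
\]
The term $\mu^p$ is harmless because $\mu\le1\le\lambda$.

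Next I prove a parabolic Poincaré inequality, again using the equation. The oscillation in time of the slice means $(v)_{B_r}(t)$ is controlled by $\frac{\theta}{r}\biint|\sfA(\cdot,Dv)|\le c\frac{\theta}{r}\biint(\mu+|Dv|)^{p-1}$. This allows replacing $(v)_Q$ by slicewise means plus a controlled error.

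\textbf{Reverse Hölder on intrinsic cylinders.} Next I derive a reverse Hölder inequality with an exponent below $p$. The key step is to work on cylinders $Q_r^{(\Lambda)}(w)$ that are intrinsic in the sense
\[
\tfrac{1}{C}\Lambda^p\le\biint_{Q_r^{(\Lambda)}(w)}|Dv|^p\le\Lambda^p,
\qquad
\biint_{Q_{s}^{(\Lambda)}(w)}|Dv|^p<\Lambda^p \ \ (s>r).
\]
On such cylinders, the Gagliardo--Nirenberg inequality interpolates the $L^2$-term between the $\sup_t L^2$ bound from Caccioppoli and the $L^q$-norm of $Dv$ for some $q<p$. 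This yields
\[
\biint_{Q_{r/2}^{(\Lambda)}}|Dv|^p\le c\Big(\biint_{Q_{r}^{(\Lambda)}}|Dv|^q\Big)^{p/q}+c\,\mu^p .
\]
The gain exponent $q<p$ is available precisely because $p>\frac{2n}{n+2}$. The intrinsic scaling absorbs the mismatch between $r^{-p}$ and $\theta^{-1}=\Lambda^{p-2}r^{-2}$.

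\textbf{Covering and Gehring.} Third, I run a stopping-time covering. For $\Lambda\ge B\lambda$, with $B$ large depending on ${\sf K}$ and the data, consider a Lebesgue point $w\in Q_{\rho/2}^{(\lambda)}$ where $|Dv(w)|>\Lambda$. Because $\frac1{|Q_s^{(\Lambda)}|}\iint|Dv|^p\le c{\sf K}^p\lambda^p B^{-\dots}\Lambda^{p}\ll\Lambda^p$ for $s\approx\rho$, continuity in $s$ produces a first exit radius $r_w<\rho/2$ at which the cylinder is intrinsic. The case distinction $p\ge2$ versus $p<2$ enters in comparing $Q^{(\Lambda)}$ with $Q^{(\lambda)}$; in either case $Q^{(\Lambda)}_s\subset Q^{(\lambda)}_{\rho}$ for $s\le\rho/2$ with suitable $B$. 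A Vitali covering by such cylinders, combined with the reverse Hölder step, gives the level-set estimate
\[
\iint_{\{|Dv|>\Lambda\}\cap Q_{\rho/2}^{(\lambda)}}|Dv|^p\le c\,\Lambda^{p-q}\iint_{\{|Dv|>\epsilon\Lambda\}\cap Q_{\rho}^{(\lambda)}}|Dv|^q .
\]
Multiplying by $\Lambda^{\sigma-p-1}$ and integrating over $\Lambda\in(B\lambda,k)$ with a truncation $k$ then gives the result. Fubini, absorbing for $\sigma-p$ small, and letting $k\to\infty$ yield $\biint_{Q_{\rho/2}^{(\lambda)}}|Dv|^\sigma\le c\lambda^{\sigma-p}\biint_{Q_{\rho}^{(\lambda)}}|Dv|^p+c(B\lambda)^\sigma\le c\lambda^\sigma$, which is the claim.

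\textbf{Main obstacle.} The delicate point is the reverse Hölder inequality on intrinsic cylinders in the singular range $\frac{2n}{n+2}<p<2$. There the $L^2$ oscillation term cannot be bounded by $|Dv|^p$ directly; it must be interpolated via the $\sup$-estimate and controlled using the intrinsic relation, which is where the threshold $p>\frac{2n}{n+2}$ enters. Here I follow \cite{Kinnunen-Lewis:1} verbatim, noting that only \eqref{bounds-A}$_{1,2}$ and $\lambda\ge1\ge\mu$ are used. The nesting of cylinders with different scaling parameters $\Lambda\ge\lambda$ is the other bookkeeping issue, handled by choosing $B$ depending on ${\sf K}$.
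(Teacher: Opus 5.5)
Your route is genuinely different from the paper's. The paper does not re-run the Kinnunen--Lewis machinery. It rescales $\tilde v(x,t)=v(\rho x,\lambda^{2-p}\rho^2 t)/(\rho\lambda)$ and $\widetilde\sfA(x,t,\xi)=\lambda^{1-p}\sfA(\rho x,\lambda^{2-p}\rho^2 t,\lambda\xi)$ to the unit cylinder. It notes that $\widetilde\sfA$ satisfies \eqref{bounds-A} with the same $\nu,L$ and with $\mu/\lambda\le 1$ in place of $\mu$. It then quotes the Kinnunen--Lewis estimate on $Q_1$, which carries the scaling deficit $d$, and scales back. The deficit is harmless because the hypothesis gives $\biint_{Q_1}|D\tilde v|^p\,\dx\dt\le{\sf K}^p$, which yields the bound $c({\sf K}^d+1)\lambda$.

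Your argument is essentially that proof unwound. After rescaling, $Q^{(\Lambda)}_r(w)$ corresponds to $Q^{(\Lambda/\lambda)}_{r/\rho}$, so starting your covering at level $B\lambda$ amounts to starting Kinnunen--Lewis for $\tilde v$ at level $B$. You gain a self-contained proof in which the uniformity in $\lambda$ is visible directly. The paper gains brevity by inheriting that uniformity from the scaling invariance of \eqref{bounds-A}, which is where $\mu\le 1\le\lambda$ enters.

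As written, however, one step of your covering fails in the singular range. For $\frac{2n}{n+2}<p<2$ and $\Lambda\ge B\lambda$, the time half-length of $Q^{(\Lambda)}_{\rho/2}(w)$ is $\frac14\Lambda^{2-p}\rho^2$, which exceeds $\lambda^{2-p}\rho^2$ once $(\Lambda/\lambda)^{2-p}>4$. So the claim $Q^{(\Lambda)}_s\subset Q^{(\lambda)}_\rho$ for all $s\le\rho/2$ is false, and enlarging $B$ makes it worse, not better. In particular, your bound on $\biint_{Q^{(\Lambda)}_s}|Dv|^p$ for $s\approx\rho$ does not follow from the hypothesis, since that cylinder leaves $Q^{(\lambda)}_\rho$.

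The fix is to restrict the stopping-time radii to $s\le c_o(\lambda/\Lambda)^{\frac{2-p}{2}}\rho$ with a small absolute constant $c_o$. This is the role of $\sfm=\max\{1,\lambda^{\frac{2-p}{2}}\}$ in the stopping-time argument of the proof of Theorem~\ref{thm:main}. At the maximal admissible radius one then gets
\[
\biint_{Q^{(\Lambda)}_{s}(w)}|Dv|^p\,\dx\dt\le c(n)\,{\sf K}^p\Big(\frac{\lambda}{\Lambda}\Big)^{\frac{p(n+2)-2n}{2}}\Lambda^p ,
\]
which is $<\Lambda^p$ for $B$ large precisely because $p>\frac{2n}{n+2}$. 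So the threshold is also used in the covering, not only in the reverse H\"older step.

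Second, you cannot absorb when the left-hand side of your level-set estimate lives on $Q^{(\lambda)}_{\rho/2}$ and the right-hand side on $Q^{(\lambda)}_\rho$. Run the covering between nested cylinders $Q^{(\lambda)}_{r_1}\subset Q^{(\lambda)}_{r_2}$, with the starting level $B$ depending on $\rho/(r_2-r_1)$, and conclude with an iteration lemma as in the proof of Theorem~\ref{thm:main}. With these two repairs your approach goes through.
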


\begin{proof}
    Without loss of generality we assume $z_o=0$. We consider the auxiliary function
 \begin{equation*}
    \tilde v(x,t)
    :=
    \frac{v(\rho x,\lambda^{2-p}\rho^2 t)}{\rho\lambda},
    \qquad
    \widetilde \sfA(x,t, \xi)
    :=
    \lambda^{1-p}\sfA(\rho x,\lambda^{2-p}\rho^2 t, \lambda \xi),
\end{equation*}
with $(x,t)\in Q_1$. Hence, 
$\tilde v\in C(-1,1;L^2(B_1,\R^N))\cap L^p(-1,1;W^{1,p}(B_1,\R^N))$ is a weak solution to 
\begin{equation*}
    \partial_t\tilde v -
    \Div \big( \widetilde\sfA (x,t, D\tilde{v})\big)
    =
    0
    \quad\mbox{in\quad $Q_1$.}
\end{equation*}
By applying~\cite{Kinnunen-Lewis:1}, we have that there exists $\sigma >p$ such that 
$$
\left[ \biint_{Q_\frac12} |D \tilde v|^\sigma\, \d x \d t \right]^\frac{1}{\sigma} \leq c \left[ \left( \biint_{Q_{1}} |D \tilde v|^p \, \d x \d t \right)^\frac{d}{p}  + 1\right],
$$
where $d$ is as in  \eqref{eq:deficit}. Scaling back to $v$ yields
\begin{align*}
    \left[ \biint_{Q_{\frac12 \rho}^{(\lambda)}} |D v|^\sigma\, \d x \d t \right]^\frac{1}{\sigma} &\leq c \lambda \left[ \lambda^{-d} \left( \biint_{Q_\rho^{(\lambda)}} |Dv|^p \, \d x \d t \right)^\frac{d}{p} + 1\right] \leq c  \big[ {\sf K }^d + 1 \big] \lambda.
\end{align*}
\end{proof}

\subsection{Steklov averages}\label{Steklov-averages}

In general, weak solutions do not possess a time derivative in the Sobolev sense. Since it is often useful to employ the weak solution itself as a test function, doing so requires additional justification. For this purpose, we introduce the Steklov averages, defined as follows. For $0<h<T$ and $f \in L^1 (\Omega_T,\R^k)$ with $k \in \N$. We define the \emph{Steklov average} of $f$ by
\[
f_h(x,t)=
\begin{cases}
\displaystyle \frac{1}{h}\int_t^{t+h} f(x,s)\,\mathrm ds,
& t\in(0,T-h),\\[6pt]
0,
& \text{otherwise}.
\end{cases}
\]
Observe that
\begin{equation} \label{eq:time-derivative-stek}
\partial_t f_h (x,t) := \frac{1}{h} [f(x,t+h) - f(x,t)].
\end{equation}
for almost every $(x, t) \in \Omega \times (0, T -h)$. Furthermore, if $f \in L^q(\Omega_T,\R^k)$ for $1\leq q <\infty$, then  $f_h \in L^q(\Omega_T,\R^k)$ and
$$
f_h \xrightarrow{h\to 0} f\quad \text{ in } L^q(\Omega_{T-\delta},\R^k) \ \text{ for any $\delta \in (0,T)$}.
$$
We refer to \cite{DiBe} (see also \cite[Lemma 2.4]{BDGLS})  for further details on the properties of Steklov averages in the study of parabolic equations.

\section{Comparison estimates}
We assume that 
\[
u\in C\big((0,T); L^2_{\mathrm{loc}}(\Omega,\R^N)\big)\cap
L^p_{\mathrm{loc}}\big(0,T;W^{1,p}_{\mathrm{loc}}(\Omega,\R^N)\big)
\]
is a local weak solution to~\eqref{par-sys}. We denote by 
\begin{equation*}
    v\in C\big([t_o-\theta, t_o+\theta),
    L^2(B_\rho (x_o),\R^N)\big)
    \cap
    L^p\big(t_o-\theta, t_o+\theta; W^{1,p}
    (B_\rho (x_o),\R^N)\big)
\end{equation*}
the unique solution to the Cauchy-Dirichlet problem
\begin{equation} \label{CD-homo}
\left\{
\begin{array}{cl}
      \partial_t v - \Div \sfA (x_o,t,Dv)
      = 0,
      & 
      \mbox{in $ Q_{\rho,\theta}(z_o)$,} \\[7pt]
      v = u, & \mbox{on  $\partial_{\rm par} Q_{\rho,\theta}(z_o)$,}
   \end{array}
\right.
\end{equation}
where $\sfA$ satisfies~\eqref{bounds-A} and~\eqref{upper-bound-A} and we define
\begin{equation} \label{eq:q-q'}
    q=p\frac{n+2}{n} \qquad\text{and}\qquad q'=\frac{p(n+2)}{p(n+2)-n}. 
\end{equation}

\begin{lemma}\label{lem:comparison}
Let $p>\frac{2n}{n+2}$, and let $u$ be a weak solution to~\eqref{par-sys} in the sense of Definition~\ref{def:weak_solution}, with $\sfA$ satisfying~\eqref{bounds-A} and~\eqref{upper-bound-A}. Let $Q_{\rho,\theta}\equiv Q_{\rho,\theta}(z_o)\Subset\Omega_T$, and let $v$ denote the unique weak solution to the homogeneous Cauchy--Dirichlet problem~\eqref{CD-homo} in $Q_{\rho,\theta}$. Then, there exists a constant $c = c(n,N,p,\nu,L)$ such that for every $\delta\in(0,1]$ we have
\begin{align*}
&\sup_{\tau\in (t_o-\theta, t_o+ \theta)}
\int_{B_\rho \times\{\tau\}}
|u-v|^2\,\dx
+
\iint_{Q_{\rho,\theta}}
|Du-Dv|^p\,\dx\dt\\
&\quad\le c \left( \delta + \frac{\omega(\rho)^\frac{p}{p-1}}{\delta^\frac{(2-p)_+}{p-1}} \right) \iint_{Q_{\rho,\theta}} (\mu^2 + |Du|^2)^\frac{p}{2} \, \d x \d t \\
&\qquad + \frac{c}{\delta^\frac{(2-p)_+(n+2)}{p(n+2) -n-p}}\left[ \iint_{Q_{\rho,\theta}} |\sff|^{q'} \, \d x \d t \right]^{\frac{1}{q'} \frac{p(n+2) }{p(n+2) -n-p}},
\end{align*}
where $q'$ is defined in~\eqref{eq:q-q'}.
\end{lemma}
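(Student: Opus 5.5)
We test the difference of the weak formulations of $u$ and $v$ with $\varphi=(u-v)\chi_{(t_o-\theta,\tau)}$. Since $u-v$ vanishes on the lateral boundary and at the initial time, this is admissible after the usual Steklov regularization from Section~\ref{Steklov-averages}. It gives, for every $\tau$,
\begin{align*}
\tfrac12\int_{B_\rho\times\{\tau\}}|u-v|^2\,\dx
&+\iint_{Q_\tau}\big(\sfA(x_o,t,Du)-\sfA(x_o,t,Dv)\big)\cdot(Du-Dv)\,\dx\dt\\
&=\iint_{Q_\tau}\big(\sfA(x_o,t,Du)-\sfA(x,t,Du)\big)\cdot(Du-Dv)\,\dx\dt+\iint_{Q_\tau}\sff\cdot(u-v)\,\dx\dt ,
\end{align*}
where $Q_\tau=B_\rho\times(t_o-\theta,\tau)$. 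By \eqref{bounds-A}$_2$ the second term on the left controls $\nu\iint(\mu^2+|Du|^2+|Dv|^2)^{\frac{p-2}{2}}|Du-Dv|^2$. For $p\ge2$ this dominates $|Du-Dv|^p$. For $p<2$ we convert it with the usual Young splitting
\[
|Du-Dv|^p\le \delta(\mu^2+|Du|^2)^{\frac p2}+c\,\delta^{-\frac{2-p}{p}}\ldots ,
\]
combined with the bound $(\mu^2+|Du|^2+|Dv|^2)^{p/2}\lesssim(\mu^2+|Du|^2)^{p/2}+|Du-Dv|^p$ from Lemma~\ref{lem:tech}. This produces the $\delta$ term and the negative powers $\delta^{-(2-p)_+/\ldots}$ in the statement.

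The coefficient term is handled by \eqref{upper-bound-A}. It is bounded by $L\omega(\rho)\iint(\mu^2+|Du|^2)^{\frac{p-1}{2}}|Du-Dv|$. For $p\ge2$ we use Young with exponents $p$ and $p'$, absorbing $\epsilon|Du-Dv|^p$ and leaving $c\,\omega(\rho)^{p/(p-1)}\iint(\mu^2+|Du|^2)^{p/2}$. For $p<2$ we absorb into the weighted quantity instead, which costs the factor $\delta^{-(2-p)/(p-1)}$ recorded in the statement. In both cases, $\omega\le 2$ allows the exponents to be rearranged.

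The main obstacle is the datum term $\iint\sff\cdot(u-v)$, since no divergence structure is available. We bound it by $\|\sff\|_{L^{q'}}\|u-v\|_{L^{q}}$ with $q=p(n+2)/n$. Then we apply the parabolic Gagliardo--Nirenberg inequality to $u-v$, which has zero lateral trace:
\[
\iint_{Q_{\rho,\theta}}|u-v|^{q}\,\dx\dt\le c\iint_{Q_{\rho,\theta}}|Du-Dv|^p\,\dx\dt\,\Big(\sup_\tau\int_{B_\rho}|u-v|^2\,\dx\Big)^{p/n}.
\]
The right-hand side is at most $c\,\mathcal E^{(n+p)/n}$, where $\mathcal E$ denotes the left-hand side of the lemma after taking the supremum over $\tau$. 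Hence
\[
\iint\sff\cdot(u-v)\le c\,\|\sff\|_{L^{q'}}\,\mathcal E^{\frac{n+p}{p(n+2)}}.
\]
Since $\frac{n+p}{p(n+2)}<1$, which is equivalent to $p>\frac{n}{n+1}$, Young's inequality gives
\[
\iint\sff\cdot(u-v)\le \epsilon\,\mathcal E+c\,\|\sff\|_{L^{q'}}^{\frac{p(n+2)}{p(n+2)-n-p}} ,
\]
which is exactly the exponent in the statement.

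For $p<2$ the quantity $\mathcal E$ contains $|Du-Dv|^p$ only after the $\delta$-conversion. This is where $\delta^{-(2-p)(n+2)/(p(n+2)-n-p)}$ comes from, and the bookkeeping of these $\delta$-powers is the delicate point. Finally we take the supremum over $\tau$, add the energy and gradient parts, and absorb all $\epsilon\mathcal E$ terms into the left-hand side. This absorption requires $\mathcal E<\infty$, which holds by the energy estimate for $v$.
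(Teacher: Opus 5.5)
Your proposal is correct and follows essentially the same route as the paper. The paper also uses a Steklov-regularized energy identity tested with $u-v$, monotonicity, the $\delta$-Young splitting when $p<2$, and Young for the coefficient term; for the datum it uses H\"older, the parabolic Gagliardo--Nirenberg inequality and Young (you merge the two Gagliardo--Nirenberg factors into $\mathcal E^{(n+p)/n}$ and use a two-exponent Young inequality where the paper uses three exponents, with the same outcome), then absorbs with $\epsilon\sim\delta^{(2-p)/p}$.

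One step should be stated precisely: the coefficient term for $p<2$. As in the paper, apply Young with exponents $p,p'$ against $\epsilon|Du-Dv|^p$, and only then pass through the $\delta$-conversion; this produces exactly $\delta^{-\frac{2-p}{p}}\epsilon^{-\frac{1}{p-1}}\omega^{p'}\sim\delta^{-\frac{2-p}{p-1}}\omega^{p'}$. A direct Young against the weighted quantity would instead give $\omega^2$ together with a $|Dv|^p$ term. Your remark that $\omega\le 2$ lets the exponents be rearranged does not fix this, because $p'>2$ means you can only bound $\omega^{p'}$ by $c\,\omega^2$, not the reverse.
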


\begin{proof}
Since $u$ is a solution to~\eqref{par-sys}  and $v$ is a solution to~\eqref{CD-homo}, we have
\begin{align*}
    &\partial_t (u-v)- \Div \left( \sfA (x_o,t, Du) - \sfA (x_o,t,Dv) \right) \\
    &\quad = \Div \left( \sfA (x,t, Du) - \sfA (x_o,t,Du)\right) + \sff \quad  \mbox{in $Q_{\rho,\theta}$},
\end{align*} 
with $u-v=0$ on $\partial_{\rm par} Q_{\rho,\theta}$.

To handle the time-derivative term, we employ the Steklov average formulation for parabolic systems. More precisely,  we choose a spatial test function $\varphi \in C_0^\infty(B_\rho)$ and a time test function
$\zeta \in C_0^\infty(t_o-\theta,t_o+\theta-h)$ and take
\[
\psi(x, t):= \varphi(x) \frac{1}{h} \int_{t-h}^{t} \zeta(s) \, \d s.
\]
We then use~\eqref{eq:time-derivative-stek}, together with Fubini's theorem, to estimate the contribution of the time-derivative term. Next, we use the definition of the Steklov averages, together with Fubini's theorem, to estimate the remaining terms. Hence, we obtain
\begin{align*}
&\int_{B_\rho \times \{t\}} \partial_t (u_h - v_h)\cdot \varphi + [\sfA(x_o,t,Du) - \sfA(x_o,t,Dv)]_h \cdot D \varphi \, \d x \\
&\quad = \int_{B_\rho \times \{t\}} [\sfA(x,t,Du) - \sfA(x_o,t,Du)]_h \cdot D \varphi + \sff_h \cdot \varphi\, \d x 
\end{align*}
for every $\varphi \in W^{1,p}_0(B_\rho,\R^N)$, $0<h<2 \theta$ and $t \in (t_o - \theta ,t_o + \theta -h)$. Here, the subscript $h$ indicates the Steklov averages defined in the Section~\eqref{Steklov-averages}. Next, we choose $\varphi(x) = \bar u_h (x,t) - \bar v_h (x,t)$, where $\bar w_h = \tfrac12 [w(x,t+h) + w(x,t)]$, $w \in \{u,v\}$. Let $\tau \in (t_o-\theta,t_o+ \theta-h)$, and denote $I_\tau = (t_o-\theta, \tau)$. Integrating over $I_\tau$ yields
\begin{align} \label{eq:steklov-form}
& \iint_{I_\tau \times B_\rho} \partial_t (u_h - v_h)\cdot \varphi + [\sfA(x_o,t,Du) - \sfA(x_o,t,Dv)]_h \cdot D \varphi \, \d x   \d t\nonumber \\
&\quad = \iint_{I_\tau \times B_\rho} [\sfA(x,t,Du) - \sfA(x_o,t,Du)]_h \cdot D \varphi + \sff_h \cdot \varphi\, \d x   \d t.
\end{align} 
Using~\eqref{eq:time-derivative-stek} we obtain
\begin{align*}
&\iint_{I_\tau \times B_\rho} \partial_t (u_h-v_h) (\bar u_h - \bar v_h) \, \d x   \d t\\
&\quad= \frac{1}{2h} \int_{t_o-\theta}^\tau \int_{ B_\rho} |u-v|^2 (x,t+h) - |u-v|^2(x,t) \, \d x  \d t \\
&\quad= \frac{1}{2h}\int_{B_\rho} \left[ \int_{t_o-\theta +h}^{\tau+h} |u-v|^2 (x,t)\, \d t - \int_{t_o-\theta}^{\tau} |u-v|^2 (x,t)\, \d t \right]\, \d x\\
&\quad = \frac{1}{2} \bint_\tau^{\tau+h}\int_{B_\rho} |u-v|^2 (x,t) \, \d x  \d t - \frac12 \bint_{t_o-\theta}^{t_o-\theta+h}  \int_{B_\rho} |u-v|^2 (x,t) \, \d x  \d t\\
&\quad \xrightarrow{h \to 0} \frac12 \int_{{B_\rho} \times \{\tau\}}|u-v|^2(x,t) \, \d x,
\end{align*}
by Lebesgue differentiation theorem and the initial condition. By passing to the limit as $h\to0$ in~\eqref{eq:steklov-form} and using the convergence stated in the display above, we obtain
\begin{align*}
&\frac12 \int_{B_\rho \times \{\tau\}} |u-v|^2 \, \d x  + \iint_{I_\tau \times B_\rho} [\sfA(x_o,t,Du) - \sfA(x_o,t,Dv)] \cdot D(u-v) \, \d x  \d t\\
&\quad = \iint_{ I_\tau \times B_\rho} [\sfA(x,t,Du) - \sfA(x_o,t,Du)] \cdot D (u-v) + \sff \cdot (u-v)\, \d x \d t .\\
&\quad \leq \iint_{Q_{\rho,\theta}} |\sfA(x,t,Du) - \sfA(x_o,t,Du)| |D u-Dv| + |\sff| |u-v| \, \d x \d t
\end{align*}
Since $\tau\in(t_o-\theta,t_o+\theta)$ is arbitrary, we consider the terms on the left hand side separately, and take the supremum with respect to $\tau$ in the first term and let $\tau\uparrow t_o+\theta$ in the second term. We thus obtain
\begin{align*}
&\sup_{\tau \in (t_0 - \theta,  t_0 +\theta)}\int_{B_\rho \times \{\tau\}} |u-v|^2 \, \d x  + \iint_{Q_{\rho,\theta}} [\sfA(x_o,t,Du) - \sfA(x_o,t,Dv)] \cdot D(u-v) \, \d x  \d t\\
&\quad \leq 3 \iint_{Q_{\rho,\theta}} |\sfA(x,t,Du) - \sfA(x_o,t,Du)| |D u-Dv| + |\sff| |u-v| \, \d x \d t \\
&\quad =: 3 \sfI \sfI.
\end{align*}

Now, we consider the case $p \geq 2$. By using~\eqref{bounds-A}$_2$, we have
\begin{align*}
&\iint_{Q_{\rho ,\theta}}  \left( \sfA (x_o,t, Du) - \sfA (x_o,t,Dv)\right) \cdot D(u-v)\, \d x \d t \\
&\quad \geq  \nu \iint_{Q_{\rho ,\theta}} (\mu^2 + |Du|^2 + |D v|^2)^\frac{p-2}{2} |Du - D v|^2\, \d x \d t  \\
&\quad \overset{p \geq 2}{\geq} C \iint_{Q_{\rho ,\theta}}  |Du - D v|^p \, \d x \d t, 
\end{align*}
by applying Lemma~\ref{lem:tech} to obtain the last inequality above and we define
\[
\sfI_2:= \iint_{Q_{\rho ,\theta}} (\mu^2 + |Du|^2 + |D v|^2)^\frac{p-2}{2} |Du - D v|^2\, \d x \d t.
\]
By using~\eqref{upper-bound-A} and Hölder's inequality, we estimate the following:
\begin{align*}
\sfI \sfI &\leq L \omega(\rho) \iint_{Q_{\rho ,\theta}}  (\mu^2 + |Du|^2 )^\frac{p-1}{2} |Du - Dv| \, \d x \d t + \iint_{Q_{\rho ,\theta}}  |\sff ||u-v|\, \d x \d t \\
&\leq \varepsilon \iint_{Q_{\rho ,\theta}}  |Du -Dv|^p \, \d x \d t + \frac{L^\frac{p}{p-1}}{\varepsilon^{\frac{1}{p-1}}} \omega(\rho)^\frac{p}{p-1} \iint_{Q_{\rho ,\theta}}  (\mu^2 + |Du|^2 )^\frac{p}{2} \, \d x \d t \\
&\quad + \left[ \iint_{Q_{\rho ,\theta}}  |\sff|^{q'} \,\d x \d t \right]^\frac{1}{q'} \left[ \iint_{Q_{\rho ,\theta}}  |u-v|^q \, \d x \d t\right]^\frac{1}{q}.
\end{align*}
For the last term, we employ the Gagliardo--Nirenberg inequality \cite[Chapter I, Proposition 3.1]{DiBe}, which yields
\begin{align*}
&\left[ \iint_{Q_{\rho ,\theta}}  |\sff|^{q'} \,\d x \d t \right]^\frac{1}{q'} \left[ \iint_{Q_{\rho ,\theta}}  |u-v|^q \, \d x \d t\right]^\frac{1}{q} \\
&\, \leq c \left[ \iint_{Q_{\rho ,\theta}}  |\sff|^{q'} \,\d x \d t \right]^\frac{1}{q'} \left[ \iint_{Q_{\rho ,\theta}} |Du-Dv|^p \, \d x \d t\right]^\frac{n}{p(n+2)} \sfI_1^\frac{1}{n+2},
\end{align*}
where $c = c(n,p)$ and
\begin{equation}\label{Def-I_1}
 \sfI_1 := \sup_{\tau\in (t_o-\theta, t_o+\theta)}\int_{B_\rho} |u-v|^2 \, \d x.   
\end{equation}
Using Young's inequality with exponents $\frac{p(n+2)}{p(n+2)-n-p}$,  $q$, and $n+2$, we obtain
\begin{align*}
&c \left[ \iint_{Q_{\rho ,\theta}}  |\sff|^{q'} \,\d x \d t \right]^\frac{1}{q'} \left[ \iint_{Q_{\rho ,\theta}} |Du-Dv|^p \, \d x \d t\right]^\frac{n}{p(n+2)} \sfI_1^\frac{1}{n+2} \\
&\le
{\varepsilon}
\left[
\sfI_1
+
\iint_{Q_{\rho,\theta}}
|Du-Dv|^p\,\dx\dt
\right]
+
\frac{c}{{\varepsilon}^{\frac{n+p}{p(n+2)-n-p}}}
\left[
\iint_{Q_{\rho,\theta}}
|\sff|^{q'}
\,\dx\dt
\right]^{\frac{p(n+2)-n}{p(n+2)-n-p}} .
\end{align*}
 By setting $\tilde c:= \min\left \{ 1 , C\right \}$, 
we can write
\begin{align*}
 \tilde c\left [ \sfI_1 + \iint_{Q_{\rho ,\theta}}  |Du - D v|^p \, \d x \d t\right]
 &\le 6\varepsilon  \left[ \sfI_1 + \iint_{Q_{\rho ,\theta}}  |Du - D v|^p \, \d x \d t\right] \\
 &+ \frac{c}{\varepsilon^{\frac{1}{p-1}}} \omega(\rho)^\frac{p}{p-1} \iint_{Q_{\rho ,\theta}}  (\mu^2 + |Du|^2 )^\frac{p}{2} \, \d x \d t  \\
 &+ \frac{c}{{\varepsilon}^{\frac{n+p}{p(n+2)-n-p}}}
\left[
\iint_{Q_{\rho,\theta}}
|\sff|^{q'}
\,\dx\dt
\right]^{\frac{p(n+2)-n}{p(n+2)-n-p}} .
\end{align*}
Choosing $ \varepsilon = \frac{\tilde c}{12}$, we may absorb the first term on the right-hand side into the left-hand side and we obtain the comparison estimate
\begin{align*}
&\sfI_1 + \iint_{Q_{\rho ,\theta}}  |Du - Dv|^p \, \d x \d t \\
&\quad\leq c \omega(\rho)^\frac{p}{p-1} \iint_{Q_{\rho ,\theta}}  (\mu^2 + |Du|^2)^\frac{p}{2} \, \d x \d t + c \left[ \iint_{Q_{\rho ,\theta}}  |\sff|^{q'} \, \d x \d t \right]^{\frac{1}{q'} \frac{p(n+2)}{p(n+2) - n - p}}
\end{align*}
when $p \geq 2$. In the case $p<2$, we proceed as follows:  for $\delta>0$, we write
\begin{align*}
|Du-Dv|^p
&=
\delta^{\frac{2-p}{2}}
\big(\mu^2 + |Du|^2+|Dv|^2\big)^{\frac{(2-p)p}{4}}
\delta^{-\frac{2-p}{2}}
\big(\mu^2 + |Du|^2+|Dv|^2\big)^{\frac{(p-2)p}{4}}
|Du-Dv|^p .
\end{align*}
An application of Young's inequality with exponents $\frac{2}{2-p}$ and $\frac{2}{p}$ implies
$$
|Du - Dv|^p \leq \delta (\mu^2 + |Du|^2 + |Dv|^2)^\frac{p}{2} + \delta^{-\frac{2-p}{p}} (\mu^2 + |Du|^2 + |Dv|^2)^\frac{p-2}{2} |Du - Dv|^2
$$
and furthermore
$$
(\mu^2 + |Du|^2 + |Dv|^2)^\frac{p}{2} \leq 3^\frac{p}{2} (\mu^2 + |Du|^2)^\frac{p}{2} + 2^\frac{p}{2} |Du - Dv|^p.
$$
By restricting $\delta \leq \frac{1}{4}$, we have
\begin{equation} \label{eq:p-difference-sing}
|Du-Dv|^p \leq 6 \delta (\mu^2 + |Du|^2)^\frac{p}{2} + 2 \delta^{-\frac{2-p}{p}} (\mu^2 + |Du|^2 + |Dv|^2)^\frac{p-2}{2} |Du - Dv|^2.
\end{equation}
Recall that
\begin{align*}
\sfI \sfI &\leq \varepsilon \left[ \sfI_1 + \iint_{Q_{\rho ,\theta}}  |Du-Dv|^p \, \d x \d t \right] + c \frac{\omega(\rho)^{p'}}{\varepsilon^\frac{1}{p-1}} \iint_{Q_{\rho ,\theta}}  (\mu^2 + |Du|^p)^\frac{p}{2} \, \d x \d t \\
&\quad + \frac{c}{\varepsilon^\frac{n+p}{p(n+2) - n - p}} \left[ \iint_{Q_{\rho ,\theta}}  |\sff|^{q'} \, \d x \d t \right]^{\frac{1}{q'} \frac{p(n+2)}{p(n+2) -n-p}}
\end{align*}
Applying the estimates above, we obtain
\begin{align*}
&\sfI_1 + \iint_{Q_{\rho ,\theta}}  |Du - Dv|^p\, \d x \d t \\
&\quad \leq 6 \delta \iint_{Q_{\rho ,\theta}}  (\mu^2 + |Du|^2)^\frac{p}{2} \, \d x \d t + \sfI_1+ 2 \delta^{-\frac{2-p}{p}} \sfI_2 \\
&\quad \leq  6 \delta \iint_{Q_{\rho ,\theta}}  (\mu^2 + |Du|^2)^\frac{p}{2} \, \d x \d t + 2 \sfI \sfI + \frac{c}{\delta^\frac{2-p}{p}} \sfI \sfI \\
&\quad \leq 6 \delta \iint_{Q_{\rho ,\theta}}  (\mu^2 + |Du|^2)^\frac{p}{2} \, \d x \d t +  C \frac{\varepsilon}{\delta^\frac{2-p}{p}} \left[ \sfI_1 + \iint_{Q_{\rho ,\theta}}  |Du-Dv|^p \, \d x \d t \right] \\
&\qquad + \frac{c \omega(\rho)^{p'}}{\delta^\frac{2-p}{p} \varepsilon^\frac{1}{p-1}} \iint_{Q_{\rho ,\theta}}  (\mu^2 + |Du|^2)^\frac{p}{2} \, \d x \d t \\
&\qquad + \frac{c}{\delta^\frac{2-p}{p}\varepsilon^\frac{n+p}{p(n+2) - n - p}} \left[ \iint_{Q_{\rho ,\theta}}  |\sff|^{q'} \, \d x \d t \right]^{\frac{1}{q'} \frac{p(n+2)}{p(n+2) -n-p}}.
\end{align*}
By choosing $\varepsilon = \frac{1}{2C} \delta^\frac{2-p}{p}$, and replacing $12 \delta$ by $\delta$, we conclude
\begin{align*}
&\sfI_1 + \iint_{Q_{\rho ,\theta}} |Du-Dv|^p \, \d x \d t\\
&\quad\leq \delta  \iint_{Q_{\rho ,\theta}} (\mu^2 + |Du|^2)^\frac{p}{2} \, \d x \d t + c \frac{\omega(\rho)^{p'}}{\delta^\frac{2-p}{p-1}} \iint_{Q_{\rho ,\theta}} (\mu^2 + |Du|^2)^\frac{p}{2} \, \d x \d t \\
&\qquad + \frac{c}{\delta^\frac{(2-p) (n+2)}{p(n+2) -n-p}}\left[ \iint_{Q_{\rho ,\theta}} |\sff|^{q'} \, \d x \d t \right]^{\frac{1}{q'} \frac{p(n+2)}{p(n+2) -n-p}},
\end{align*}
which holds for any $\delta \in (0,1]$, and completes the proof in the case $p < 2$.

\end{proof}

\subsection{Comparison estimates for parabolic systems with VMO coefficients}
In the following, we turn our attention to the system with VMO coefficients. Let 
$$
v \in C \big( [t_o-\theta,t_o+\theta) ; L^2(B_\rho(x_o),\R^N)) \cap L^p(t_o-\theta,t_o+\theta ; W^{1,p}(B_\rho(x_o),\R^N) \big)
$$
be the unique weak solution to
\begin{equation} \label{CD-homo-2}
\left\{
\begin{array}{cl}
      \partial_t v - \Div \sfA (x,t,Dv)
      = 0,
      & 
      \mbox{in $ Q_{\rho,\theta}(z_o)$,} \\[7pt]
      v = u, & \mbox{on  $\partial_{\rm par} Q_{\rho,\theta}(z_o)$,}
   \end{array}
\right.
\end{equation}
where $\sfA$ is of the form~\eqref{eq:A-ea} under assumptions~\eqref{eq:e-uniform-ell} and~\eqref{eq:e-VMO}.
\begin{lemma} \label{lem:comparison-vmo}
Let $p>\frac{2n}{n+2}$, and let $u$ is a  weak solution to~\eqref{par-sys} 
in the sense of Definition~\ref{def:weak_solution}, with $\sfA$ satisfying ~\eqref{eq:A-ea}-\eqref{eq:e-VMO}. Let  $Q_{\rho,\theta} \equiv Q_{\rho,\theta}(z_o) \Subset \Omega_T$ and let $v$ denote the unique weak solution to the homogeneous Cauchy-Dirichlet problem~\eqref{CD-homo-2} on $Q_{\rho,\theta}$. Then, there exists a constant  $c = c(n,N,p,\nu,L)$ such that for every $\delta\in(0,1]$ we have
\begin{align*}
&\sup_{\tau\in (t_o-\theta, t_o+\theta)}
\int_{B_\rho \times\{\tau\}}
|u-v|^2\,\dx
+
\iint_{Q_{\rho,\theta}}
|Du-Dv|^p\,\dx\dt\\
&\quad\le \delta  \iint_{Q_{\rho,\theta}} (\mu^2 + |Du|^2)^\frac{p}{2} \, \d x \d t  + \frac{c}{\delta^\frac{(2-p)_+(n+2)}{p(n+2) -n-p}}\left[ \iint_{Q_{\rho,\theta}} |\sff|^{q'} \, \d x \d t \right]^{\frac{1}{q'} \frac{p(n+2) }{p(n+2) -n-p}},
\end{align*}    
where $q'$ is defined in~\eqref{eq:q-q'}.
\end{lemma}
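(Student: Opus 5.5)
The plan is to follow the proof of Lemma~\ref{lem:comparison} almost verbatim. The only change is that the comparison function $v$ now solves the system with the same $x$-dependent vector field $\sfA(x,t,\cdot)=e(x)\sfa(t,\cdot)$. As a result, the term involving the modulus $\omega(\rho)$ disappears entirely. Subtracting the weak formulations of \eqref{par-sys} and \eqref{CD-homo-2}, the difference $u-v$ solves
\[
\partial_t(u-v)-\Div\big(\sfA(x,t,Du)-\sfA(x,t,Dv)\big)=\sff \quad\text{in } Q_{\rho,\theta},
\]
with $u-v=0$ on $\partial_{\rm par}Q_{\rho,\theta}$. Here $\sfA(x,t,\cdot)$ satisfies \eqref{bounds-A}, with the constants $\nu,L$ replaced by $\nu^2, L^2$ in view of \eqref{eq:e-uniform-ell}.

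The first step is to carry out the Steklov average argument exactly as before, testing with $\bar u_h-\bar v_h$. Letting $h\to0$, taking the supremum in $\tau$ and then letting $\tau\uparrow t_o+\theta$, this yields
\[
\sfI_1+\iint_{Q_{\rho,\theta}}[\sfA(x,t,Du)-\sfA(x,t,Dv)]\cdot D(u-v)\,\dx\dt\le 3\iint_{Q_{\rho,\theta}}|\sff||u-v|\,\dx\dt=:3\,\sfI\sfI ,
\]
with $\sfI_1$ as in \eqref{Def-I_1}. The monotonicity condition \eqref{bounds-A}$_2$ (multiplied by $e\ge\nu$) bounds the second term on the left from below by $\nu^2\sfI_2$.

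The second step treats $\sfI\sfI$ with Hölder's inequality in $L^{q'}\times L^{q}$. This is followed by the parabolic Gagliardo--Nirenberg inequality \cite[Chapter I, Proposition 3.1]{DiBe} applied to $u-v$, which vanishes on the lateral boundary. Then Young's inequality with the three exponents $\frac{p(n+2)}{p(n+2)-n-p}$, $q$, $n+2$ gives
\[
\sfI\sfI\le \varepsilon\Big[\sfI_1+\iint_{Q_{\rho,\theta}}|Du-Dv|^p\,\dx\dt\Big]+c\,\varepsilon^{-\frac{n+p}{p(n+2)-n-p}}\Big[\iint_{Q_{\rho,\theta}}|\sff|^{q'}\,\dx\dt\Big]^{\frac{1}{q'}\frac{p(n+2)}{p(n+2)-n-p}} .
\]

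In the case $p\ge2$, $\sfI_2$ dominates $\iint|Du-Dv|^p$ by Lemma~\ref{lem:tech}. Choosing $\varepsilon$ small and absorbing then gives the estimate even without the $\delta$-term, so it trivially holds with $\delta\ge0$.

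For $p<2$, the third step uses the pointwise inequality \eqref{eq:p-difference-sing}, which gives $|Du-Dv|^p\le 6\delta(\mu^2+|Du|^2)^{p/2}+2\delta^{-\frac{2-p}{p}}(\cdots)^{\frac{p-2}{2}}|Du-Dv|^2$. This bounds $\sfI_1+\iint|Du-Dv|^p$ by $6\delta\iint(\mu^2+|Du|^2)^{p/2}+c\,\delta^{-\frac{2-p}{p}}\sfI\sfI$. Inserting the bound for $\sfI\sfI$, choosing $\varepsilon=\delta^{\frac{2-p}{p}}/(2C)$ to absorb, and renaming $\delta$ produces the factor $\delta^{-\frac{(2-p)(n+2)}{p(n+2)-n-p}}$. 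This exponent comes from $\frac{2-p}{p}\big(1+\frac{n+p}{p(n+2)-n-p}\big)=\frac{(2-p)(n+2)}{p(n+2)-n-p}$.

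The main obstacle is purely bookkeeping, in two places. One is making sure the Steklov/energy argument is justified for the $x$-dependent field; this works because $e$ is only bounded measurable and appears linearly, so nothing beyond \eqref{bounds-A} is used. The other is keeping track of the exponents of $\delta$ and $\varepsilon$ in the singular case. No VMO information is needed at this stage; it enters only in the subsequent comparison of $v$ with the frozen problem.
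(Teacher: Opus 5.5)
Your proposal is correct and follows essentially the same route as the paper. Both arguments run the Steklov-average energy estimate for $u-v$, bound the monotone term from below using $e\ge\nu$ together with \eqref{bounds-A}$_2$, and handle $\iint|\sff||u-v|$ by H\"older, the parabolic Gagliardo--Nirenberg inequality and three-exponent Young, then split into $p\ge2$ (Lemma~\ref{lem:tech}) and $p<2$ (\eqref{eq:p-difference-sing}, with $\varepsilon\sim\delta^{\frac{2-p}{p}}$). Your exponent bookkeeping, which the paper only summarizes as ``similar considerations'', is correct.
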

\begin{proof}
Observe that
\begin{align*}
\partial_t (u-v) - \Div (e(x) (\sfa(t,Du) - \sfa(t,Dv))) = \sff \quad \text{ in } Q_{\rho,\theta},
\end{align*}
with $u=v$ on $\partial_{\mathrm{par}} Q_{\rho,\theta}$. Arguing as before, using the Steklov average formulation, we obtain
\begin{align*}
\sup_{\tau \in (t_0 - \theta,  t_0 +\theta)}\int_{B_\rho \times \{\tau\}} |u-v|^2 \, \d x
&+
\iint_{Q_{\rho,\theta}}
e(x)
\big(
\sfa(t,Du) - \sfa(t,Dv)
\big)
\cdot
D(u-v)
\, \dx\dt\\
&\le
3 \iint_{Q_{\rho,\theta}}|\sff| |u -v|\,\dx\dt.
\end{align*}
Since $e$ satisfies~\eqref{eq:e-uniform-ell} and $\sfa$ satisfies~\eqref{bounds-A}$_2$, we have
\begin{align*}
&\sfI_1 + c \iint_{Q_{\rho,\theta}} (\mu^2 + |Du|^2 + |Dv|^2)^\frac{p-2}{2} |Du- Dv|^2\, \d x \d t \\
&\quad \leq 3 \iint_{Q_{\rho,\theta}} |\sff| |u-v| \, \d x \d t, 
\end{align*}
where $\sfI_1$ is defined as in ~\eqref{Def-I_1}. Next, we distinguish between the superquadratic and subquadratic regimes. In the case $p \geq 2$, we treat the term on the right hand side exactly as in the proof of Lemma~\ref{lem:comparison}, yielding
$$
\sfI_1 + \iint_{Q_{\rho,\theta}} |Du- Dv|^p \, \d x\d t \leq c \left[ \iint_{Q_{\rho,\theta}} |\sff|^{q'} \, \d x \d t \right]^{\frac{1}{q'} \frac{p(n+2)}{p(n+2) - n - p}}.
$$
In the case $ 1<p<2$, by similar considerations as in the proof of Lemma~\ref{lem:comparison}, we have
\begin{align*}
\sfI_1 +  \iint_{Q_{\rho,\theta}} |Du- Dv|^p \, \d x\d t &\leq \delta \iint_{Q_{\rho,\theta}} (\mu^2 + |Du|^2)^\frac{p}{2} \, \d x \d t \\
&\quad + \frac{c}{\delta^\frac{(2-p)(n+2)}{p(n+2) -n-p}}\left[ \iint_{Q_{\rho,\theta}} |\sff|^{q'} \, \d x \d t \right]^{\frac{1}{q'} \frac{p(n+2) }{p(n+2) -n-p}}.
\end{align*}
\end{proof}

\section{Proof of Theorem~\ref{thm:main}}
\subsection{Stopping time argument}
Let $R_o \leq 1$ to be determined later, depending only on $n,N,p,\nu,L, s$ and $\omega(\cdot)$, and consider $R \leq R_o$. In this part, we consider the parabolic cylinder $Q_R (\tilde z_o) = B_R (\tilde x_o) \times (\tilde t_o - R^2 , \tilde t_o + R^2) \Subset \Omega_T$. Without loss of generality, we suppose that $\tilde z_o = (\tilde x_o, \tilde t_o)$ coincides with the origin and omit it from the notation. We consider nested cylinders $Q_{\tfrac12 R} \subset Q_{r_1} \subset Q_{r_2} \subset Q_R$, where $\tfrac12 R \leq r_1 < r_2 \leq R$. We set
\begin{align}\label{def:lambda_o}
\lambda_o^\frac{p}{d} := 1 &+ \biint_{Q_R} |Du|^p \, \d x \d t \nonumber\\
&+ \mathsf{M}^p |Q_R|^\frac{p}{p(n+2) - (n+p)} \left[ \biint_{Q_R} |\sff|^{q'} \, \d x \d t \right]^{\frac{1}{q'} \frac{p(n+2)}{p(n+2) - (n+p)} },
\end{align}
where $d$ is defined in~\eqref{eq:deficit}, $q$ and $q'$ are defined as in~\eqref{eq:q-q'}, and ${\sf M} \geq 1$ will be determined later in terms of  $n,N,p,\nu,L$ and $s$. Observe that the exponent of $|Q_R|$ is related to the exponent of $\biint_{Q_R} |\sff|^{q'} \, \d x \d t$ as follows:
\begin{equation} \label{exponent_average}
 \frac{p}{  p(n + 2) - (n +p)} = \frac{1}{q'}\frac{p(n+2)}{p(n +2) - (n +p)} - 1.   
\end{equation}
Let $\lambda>1$ and define 
\[
\sfm = \max \big\{ 1 , \lambda^\frac{2-p}{2} \big\}.
\]
Notice that for every point $z_o\in Q_{r_1}$, and any $r$ satisfying
\begin{equation}\label{choice-r}
\frac{r_2-r_1}{2^{6}\sfm}
\le r \le
\frac{r_2-r_1}{2 \sfm},
\end{equation}
yields the inclusion
\[
Q_r^{(\lambda)}(z_o) \subset Q_{r_2},
\]
 since
$$
\lambda^{2-p} r^2 \leq \lambda^{2-p} \left( \frac{r_2-r_1}{2\sfm} \right)^2 < r_2^2 - r_1^2.
$$
Next, we set 
\begin{equation}\label{def:B}
    {\sf B}:=
    \bigg(
    \frac{2^{6}R}{r_2-r_1}
    \bigg)^{(n+2) \frac{d}{p}},
\end{equation}
and let
\begin{equation} \label{eq:lambda-geq-Blambda0}
    \lambda > {\sf B} \lambda_o.
\end{equation}
By the same reasoning as in \cite{ABDM}, we can enlarge the domain of integration from 
$Q_r^{(\lambda)}(z_o)$ to $Q_R$ and using 
\eqref{eq:lambda-geq-Blambda0}, we get for every $r$ as in \eqref{choice-r} that 
\begin{align}\label{eq:lambda-sub}\nonumber
    \biint_{Q_r^{(\lambda)}(z_o)}& |Du|^p\,\dx\dt
    +
    {\sf M}^p \big|Q_r^{(\lambda)}\big|^{\frac{p}{p(n+2)-(n+p)}}
    \bigg[
    \biint_{Q_r^{(\lambda)}(z_o)}|\sff|^{q'}
    \,\dx\dt
    \bigg]^{\frac{1}{q'}\frac{p(n+2)}{p(n+2)-(n+p)}}\\\nonumber
    &\le 
    \frac{|Q_R|}{|Q_r^{(\lambda)}|}\Bigg[
    \biint_{Q_R}|Du|^p\,\dx\dt\\\nonumber
    &\qquad\qquad\quad
    +
    {\sf M}^p|Q_R|^{\frac{p}{p(n+2)-(n+p)}}
    \bigg[
    \biint_{Q_R}|\sff|^{q'}\,\dx\dt
    \bigg]^{\frac{1}{q'}\frac{p(n+2)}{p(n+2)-(n+p)}}\Bigg]\\\nonumber
    &<  
    \Big(\frac{R}{r}\Big)^{n+2}\lambda^{p-2}\lambda_o^{\frac{p}{d}} \\
    &\leq \lambda^p.
    \end{align}
     In the final step, we used \eqref{eq:deficit}, \eqref{choice-r}, \eqref{def:B}, and \eqref{eq:lambda-geq-Blambda0}. Next, we consider $\lambda$ satisfying ~\eqref{eq:lambda-geq-Blambda0} and we set the \emph{superlevel set}
 \begin{equation}\label{def-superlevel set}
   \sfE(\lambda, r_1):= \{z_o \in Q_1\, :\, \text{is a Lebesgue point of}\, |Du| \, \text{and}\,  |Du|(z_o)> \lambda\}.   
 \end{equation}
As a consequence of the Lebesgue differentiation theorem, for any $z_o \in \sfE(\lambda, r_1)$, we have
 \[
 \lim_{\rho\rightarrow 0} \biint_{Q_{\rho}^{(\lambda)(z_o)}} |Du |^p  \d x \d t= |Du(z_o)|^p >\lambda^p.
 \]
It follows that, for all sufficiently small $\rho$,
\begin{align*}
    \biint_{Q_\rho^{(\lambda)}(z_o)} &|D u|^p \,\dx\dt\\
    &+
    {\sf M}^p
    \big|Q_\rho^{(\lambda)}\big|^{\frac{p}{p(n+2)-(n+p)}}
    \bigg[
        \biint_{Q_\rho^{(\lambda)}(z_o)} |\sff|^{q'} \,\dx\dt
    \bigg]^{\frac{1}{q'}\frac{p(n+2)}{p(n+2)-(n+p)}}
    >  \lambda^p.
\end{align*}
 By combining the inequality above with~\eqref{eq:lambda-sub} and using the absolute continuity of the integral, there exists a maximal radius
\[
0<\rho_{z_o}<\frac{r_2-r_1}{2^6\sfm}
\]
such that
\begin{align*}
    \biint_{Q_\rho^{(\lambda)}(z_o)} &|D u|^p \,\dx\dt\\
    &+
    {\sf M}^p
    \big|Q_\rho^{(\lambda)}\big|^{\frac{p}{p(n+2)-(n+p)}}
    \bigg[
        \biint_{Q_\rho^{(\lambda)}(z_o)} |\sff|^{q'} \,\dx\dt
    \bigg]^{\frac{1}{q'}\frac{p(n+2)}{p(n+2)-(n+p)}}
    =
    \lambda^p
\end{align*}
and 
\begin{align*}
    \biint_{Q_\rho^{(\lambda)}(z_o)} &|D u|^p \,\dx\dt\\
    &+
    {\sf M}^p
    \big|Q_\rho^{(\lambda)}\big|^{\frac{p}{p(n+2)-(n+p)}}
    \bigg[
        \biint_{Q_\rho^{(\lambda)}(z_o)} |\sff|^{q'} \,\dx\dt
    \bigg]^{\frac{1}{q'}\frac{p(n+2)}{p(n+2)-(n+p)}}
    < \lambda^p,
\end{align*}
for any $\rho_{z_o}<\rho \le \frac{r_2- r_1}{2\sfm}$.

\subsection{A Covering argument for superlevel sets}
 Let $\mathcal{F}$ be a family of intrinsic cylinders consisting of
\[
\left\{
Q_{\rho_{z_o}}^{(\lambda)}(z_o)
: z_o\in\sfE(\lambda,r_1)
\right\},
\]
where $\lambda$ satisfies~\eqref{eq:lambda-geq-Blambda0} and $\rho_{z_o}\in\left(0,\frac{r_2-r_1}{2^6\sfm}\right)$ is determined by the preceding stopping-time argument. By Vitali's covering theorem, there exists a countable pairwise disjoint subfamily
\[
\left\{
Q_i\equiv Q_{\rho_i}^{(\lambda)}(z_i)
\right\}_{i\in\mathbb{N}}
\subset\mathcal{F}
\]
such that
\[
\sfE(\lambda,r_1)
\subset
\bigcup_{i\in\mathbb{N}}
Q_{5\rho_i}^{(\lambda)}(z_i).
\]
Moreover, for every $i\in\mathbb{N}$, the stopping-time construction yields
\begin{align}\label{eq:intr}
    \biint_{Q_i} |D u|^p \,\dx\dt
    + 
    {\sf M}^p|Q_i|^{\frac{p}{p(n+2)-(n+p)}} \bigg[\biint_{Q_i} |\sff|^{q'} \,\dx \dt\bigg]^{\frac{1}{q'}\frac{p(n+2)}{p(n+2)-(n+p)}}  
    &=
    \lambda^p, 
\end{align}
and
\begin{align} \label{eq:stopping-time-above-maxradius}\nonumber
    \biint_{Q_{\rho}^{(\lambda)}(z_i)}& |D u|^p \,\dx\dt \\
    &+ 
    {\sf M}^p\big|Q_{\rho}^{(\lambda)}\big|^{\frac{p}{p(n+2)-(n+p)}} \bigg[\biint_{Q_{\rho}^{(\lambda)}(z_i)} |\sff|^{q'} \,\dx \dt\bigg]^{\frac{1}{q'}\frac{p(n+2)}{p(n+2)-(n+p)}}  
    <
    \lambda^p
\end{align}
for every $\rho\in \big(\rho_{i}, \tfrac{ r_2-r_1}{2 \sfm}\big]$. To simplify the notation, we define
\[
    2^\ell Q_i:= Q_{2^\ell\rho_i}^{(\lambda)}(z_i),
    \qquad i,\ell\in\mathbb N.
\]
By construction of $\rho_i$, we have \(2^5Q_i \subset Q_{r_2} \subset Q_R\) for every \(i\in\mathbb N\).

\subsection{Proof of Theorem~\ref{thm:main} under the condition~\eqref{upper-bound-A}} \label{sec:proof-cont-A}

We begin by applying a comparison argument with solutions to the corresponding homogeneous system, where the spatial variable of the vector field is frozen. Let 
\begin{equation} \label{eq:vi-space}
\begin{split}
    v_i&\in C
    \big(
    [t_i-\lambda^{2-p}(2^5\rho_i)^2,
    t_i+\lambda^{2-p}(2^5\rho_i)^2);
    L^2 \big(B_{2^5\rho_i}(x_i);\mathbb{R}^N
    \big)\big)\\
    &\phantom{\in\,}
    \cap
    L^p\big(
    t_i-\lambda^{2-p}(2^5\rho_i)^2,
    t_i+\lambda^{2-p}(2^5\rho_i)^2;
    W^{1,p} \big(B_{2^5\rho_i}(x_i);\mathbb{R}^N
    \big)\big)
\end{split}
\end{equation}
be a weak solution to
\begin{equation*}
\left\{
\begin{array}{cl}
      \partial_t v_i - \Div \sfA (x_i,t,Dv_i)
      = 0,
      & 
      \mbox{in $ 2^5 Q_i$,} \\[7pt]
      v_i = u, & \mbox{on  $\partial_{\rm par} \big(2^5 Q_i\big)$,}
   \end{array}
\right.
\end{equation*}
where $u$ is a weak solution to~\eqref{par-sys}. An application of Lemma~\ref{lem:comparison} for $u - v_i$ yields
\begin{align} \label{eq:comparison1}
&\iint_{2^5Q_i} |Du-Dv_i|^p \, \d x \d t \nonumber \\
&\quad\leq c\left( \delta + \frac{\omega(R)^{p'}}{\delta^\frac{(2-p)_+}{p-1}}\right) \iint_{2^5Q_i} (\mu^2 + |Du|^2)^\frac{p}{2} \, \d x \d t \\
&\qquad + \frac{c}{\delta^\frac{(2-p)_+ (n+2)}{p(n+2) -n-p}}\left[ \iint_{2^5Q_i} |\sff|^{q'} \, \d x \d t \right]^{\frac{1}{q'} \frac{p(n+2)}{p(n+2) -n-p}}, \nonumber
\end{align}
with $c = c (n,N,p,\nu,L)$.
By choosing $\delta = 1$ in~\eqref{eq:comparison1} and  recalling $\mu \leq 1 \leq \lambda$, $\omega \leq 2$  and  let $\sf M> 1$, we have
\begin{align} \label{eq:v_i-subintinsic}
\biint_{2^5 Q_i} &|D v_i|^p \, \d x \d t \nonumber  \\
&\leq c \biint_{2^5Q_i} |Du -D v_i|^p \, \d x \d t + c \biint_{2^5 Q_i} |Du|^p \, \d x \d t  \nonumber  \\
& \le  c \biint_{2^5Q_i} (\lambda^2 + |Du|^2)^\frac{p}{2} \, \d x \d t + c \biint_{2^5 Q_i} |Du|^p \, \d x \d t \\
& \qquad\qquad+ |2^5Q_i|^{\frac{p}{p(n +2) - (n +p)}} \left[ \biint_{2^5Q_i} |\sff|^{q'} \, \d x \d t \right]^{\frac{1}{q'} \frac{p(n+2)}{p(n+2) -n-p}}\nonumber  \\
& \le c \Bigg[ \lambda^p + \biint_{2^5 Q_i} |Du|^p \, \d x \d t \nonumber\\
&\qquad\qquad +  {\sf M}^p|2^5Q_i|^{\frac{p}{p(n +2) - (n +p)}} \left[ \biint_{2^5Q_i} |\sff|^{q'} \, \d x \d t \right]^{\frac{1}{q'} \frac{p(n+2)}{p(n+2) -n-p}}\Bigg]\nonumber \\
&\leq {\sf c} \lambda^p \nonumber, 
\end{align}
with ${\sf c}  = {\sf c} (n,N,p,\nu,L)$.  In other words, the cylinder $2^5 Q_i$ is subintrinsic with respect to $v_i$.

\subsubsection{Estimates on the superlevel sets of $|Du|$} Let $\varepsilon = \varepsilon(n,N,p,L,\nu)> 0$ be the one from Lemma~\ref{lem:HI}. Fix $s \in (p, p+\tfrac{4}{n} + \varepsilon)$, and let $\gamma = \gamma(s,p,n,N,L,\nu) = \tfrac12 (s+ p+\tfrac{4}{n} + \varepsilon )$. 
Applying Lemma~\ref{lem:elementary-superlevel} with $(k, a, b, \alpha, \beta, K)$ replaced by $(Nn, Du, Dv_i, p, \gamma, K\lambda)$ and integrating over $2^3Q_i\cap\{|Du|>K\lambda\}$, we obtain
\begin{align} \label{eq:level-set-first}
&\iint_{2^3 Q_i \cap \{|Du|> K \lambda\}} |Du|^p \, \d x \d t \nonumber\\
&\quad \leq 2^p \iint_{2^5 Q_i } |Du - D v_i|^p \, \d x \d t + \frac{2^{\gamma -1}}{(K \lambda)^{\gamma-p}} \iint_{2^4 Q_i } |Dv_i|^\gamma \, \d x \d t.
\end{align} 
Since~\eqref{eq:v_i-subintinsic} holds, we can choose ${\sf K}={\sf c}^{\frac{1}{p}}$ and apply the higher integrability lemma (Lemma~\ref{lem:HI}) to obtain 
\begin{equation}\label{ineq:application-HI}
\biint_{2^4 Q_i} |Dv_i|^\gamma\d x \d t
\leq c\lambda^\gamma,
\end{equation}
where $c=c(n,N,p, L,\nu)$, which implies
$$
\frac{2^{\gamma -1}}{(K \lambda)^{\gamma-p}} \iint_{2^4 Q_i } |Dv_i|^\gamma \, \d x \d t \leq c\frac{2^{\gamma -1}}{K^{\gamma-p}} \lambda^p |Q_i|.
$$
Using the inequality above together with comparison estimate~\eqref{eq:comparison1} in~\eqref{eq:level-set-first}, we get
\begin{align*}
\iint_{2^3 Q_i \cap \{|Du|> K \lambda\}}& |Du|^p \, \d x \d t \\
&\leq c  \Bigg[ \left( \delta + \frac{\omega(R)^{p'}}{\delta^\frac{(2-p)_+}{p-1}}\right) \iint_{2^5 Q_i} (\mu^2 + |Du|^2 )^{\frac{p}{2}}\, \d x \d t \nonumber\\
& \qquad \qquad +  \frac{c}{\delta^\frac{(2-p)_+ (n+2)}{p(n+2) -n-p}} \left[ \iint_{2^5Q_i} |\sff|^{q'} \, \d x \d t \right]^{\frac{1}{q'} \frac{p(n+2)}{p(n+2) -n-p}}\Bigg] \\
&\qquad \qquad + c\frac{2^{\gamma -1}}{K^{\gamma-p}} \lambda^p |Q_i|\nonumber 
\end{align*}
As a consequence the cylinder $2^5 Q_i$ is subintrinsic,  that is, ~\eqref{eq:stopping-time-above-maxradius} holds, we have
\begin{align*}
   &\iint_{2^5 Q_i} (\mu^2 + |Du|^2)^{\frac{p}{2}} \, \d x \d t \nonumber \le c\big|2^5 Q_i\big| \biint_{2^5 Q_i} (\lambda^p + |Du|^p) \, \d x \d t \le  c\big|2^5 Q_i\big| \lambda^p.  \nonumber\\   
\end{align*}
and
\begin{align}\label{eq:f-estimate}
    &\left[
\iint_{2^5 Q_i}
|\sff|^{q'}
\,\dx\dt
\right]^{\frac1{q'}\frac{p(n+2)}{p(n+2)-n-p}} \nonumber\\
&\qquad=
\frac{\big|2^5 Q_i\big|}{{\sf M}^p}
{\sf M}^p
\big|2^5 Q_i\big|^{\frac{p}{p(n+2)-n-p}}
 \left[
\biint_{2^5 Q_i}
|\sff|^{q'}
\,\dx\dt
\right]^{\frac1{q'}\frac{p(n+2)}{p(n+2)-n-p}}\\
&\qquad \le
\frac{1}{{\sf M}^p}\big|2^5 Q_i\big|\lambda^p\nonumber.
\end{align}
Combining the inequalities above, we obtain
\begin{align} \label{eq:gradu-super}
&\iint_{2^3 Q_i \cap \{|Du|> K \lambda\}} |Du|^p \, \d x \d t \nonumber \\
&\quad \leq c \left( \delta + \frac{\omega(R)^{p'}}{\delta^\frac{(2-p)_+}{p-1}} + \frac{1}{{\sf M}^p \delta^\frac{(2-p)_+(n+2)}{p(n+2) - n -p}} + \frac{2^{\gamma }}{K^{\gamma-p}}\right) \lambda^p |Q_i|.
\end{align}
 As a consequence of ~\eqref{eq:intr}, we have that $Q_i$ is intrinsic, and, together with ~\eqref{exponent_average}, we obtain
\begin{equation} \label{eq:Qi-lambda-eq}
|Q_i| \lambda^p =  \iint_{Q_i} |D u|^p \,\dx\dt
    + 
    {\sf M}^p \bigg[\iint_{Q_i} |\sff|^{q'} \,\dx \dt\bigg]^{\frac{1}{q'}\frac{p(n+2)}{p(n+2)-(n+p)}} . 
\end{equation}
Let $\vartheta>0$ be a parameter to be chosen later, and define $\eta>1$ by
\begin{equation*}
\eta:=\frac{p(n+2)-n}{p},
\end{equation*}
so that
\begin{equation}\label{eq-eta'}
\frac{1}{q'\eta'}\frac{p(n+2)}{p(n+2)-(n+p)}=1, 
\end{equation}
where $q'$ is defined in ~\eqref{eq:q-q'}. By H\"older's inequality together with ~\eqref{eq-eta'}, we obtain
\begin{align*}
    \bigg[
    \iint_{Q_i} &|\sff|^{q'}\,\dx\dt
    \bigg]^{\frac1{q'}\frac{p(n+2)}{p(n+2)-(n+p)}}\\
    &=
    \bigg[
    \iint_{Q_i} |\sff|^{\vartheta}|\sff|^{q'-\vartheta}\,\dx\dt
    \bigg]^{\frac1{q'}\frac{p(n+2)}{p(n+2)-(n+p)}}\\
    &\le
    \bigg[
    \iint_{Q_i} |\sff|^{\vartheta	\eta}\,\dx\dt
    \bigg]^{\frac1{ q' 	\eta }\frac{p(n+2)}{p(n+2)-(n+p)}}\bigg[
    \iint_{Q_i} |\sff|^{(q'-\vartheta)	\eta'}\,\dx\dt
    \bigg]^{\frac{1}{q'	\eta'}\frac{p(n+2)}{p(n+2)-(n+p)}}\\
    &=
    \bigg[
    \iint_{Q_i} |\sff|^{\vartheta\eta}\,\dx\dt
    \bigg]^{\frac1{q' \eta }\frac{p(n+2)}{p(n+2)-(n+p)}}
    \iint_{Q_i} |\sff|^{(q'-\vartheta)\eta'}\,\dx\dt.
\end{align*}
Now, we denote
\begin{equation} \label{def:alpha}
 \alpha = \frac{n+2}{n(p-1) +p + s}.   
\end{equation}
In order to apply Fubini type argument later in the proof, we choose $\vartheta$ such that
\[
\vartheta \eta = \frac{s}{p}  (q'-\vartheta) \eta'
\]
is satisfied, which implies
$$
\vartheta \eta = \alpha s, \quad (q'-\vartheta)\eta' = \alpha p \quad \text{ and } \quad \tfrac1{q' \eta }\tfrac{p(n+2)}{p(n+2)-(n+p)} = \tfrac{p}{p(n+2) - (n+p)}.
$$
With such a choice of $\vartheta$, the preceding Hölder's inequality can be written as
$$
 \bigg[
    \iint_{Q_i} |\sff|^{q'}\,\dx\dt
    \bigg]^{\frac1{q'}\frac{p(n+2)}{p(n+2)-(n+p)}} \leq \bigg[
    \iint_{Q_i} |\sff|^{\alpha s}\,\dx\dt
    \bigg]^{\frac{p}{p(n+2)-(n+p)}}
    \iint_{Q_i} |\sff|^{\alpha p}\,\dx\dt.
$$
Furthermore, ~\eqref{eq:Qi-lambda-eq} can be estimated as
\begin{equation} \label{eq:Qi-lambda-ineq}
    |Q_i| \lambda^p \leq \iint_{Q_i} |Du|^p \, \d x\d t +  {\sf M}^p \sfF^p \iint_{Q_i} |\sff|^{\alpha p} \, \d x \d t,
\end{equation}
where we denoted
$$
\sfF = \left[ \iint_{Q_R} |\sff|^{\alpha s} \, \d x \d t \right]^\frac{1}{p(n+2) - (n+p)}.
$$
 We decompose the integration domains into superlevel and sublevel regions to estimate the right-hand side of~\eqref{eq:Qi-lambda-ineq}. Hence,
\begin{align*}
    &\iint_{Q_i} |Du|^p \, \d x\d t +  {\sf M}^p \sfF^p \iint_{Q_i} |\sff|^{\alpha p} \, \d x \d t  \\
    &\quad \leq \iint_{Q_i \cap \{|Du| > \lambda/4\}} |Du|^p \, \d x\d t +  {\sf M}^p \sfF^p \iint_{Q_i\cap \{ |\sff|^\alpha > \lambda/(4 {\sf M} \sfF)\}} |\sff|^{\alpha p} \, \d x \d t + \tfrac{1}{2} |Q_i| \lambda^p,
\end{align*}
which yields  
\begin{align*}
|Q_i| \lambda^p \leq 2 \iint_{Q_i \cap \{ |Du| > \lambda /4 \}} |Du|^p  \, \d x\d t + 2 {\sf M}^p \sfF^p \iint_{Q_i \cap \{ |\sff|^\alpha > \lambda/(4 {\sf M} \sfF)\}} |\sff|^{\alpha p } \, \d x \d t.
\end{align*}
Combining this with~\eqref{eq:gradu-super}, we have obtained
\begin{align*} 
&\iint_{2^3 Q_i \cap \{|Du|> K \lambda\}} |Du|^p \, \d x \d t \nonumber \\
&\quad \leq c {\sf Q} \left[ \iint_{Q_i \cap \{ |Du| > \lambda /4 \}} |Du|^p  \, \d x\d t + {\sf M}^p \sfF^p \iint_{Q_i \cap \{ |\sff|^\alpha > \lambda/(4 {\sf M} \sfF)\}} |\sff|^{\alpha p } \, \d x \d t \right],
\end{align*}
where we denoted
\[
{\sf Q} =  \delta + \frac{\omega(R)^{p'}}{\delta^\frac{(2-p)_+}{p-1}} + \frac{1}{{\sf M}^p \delta^\frac{(2-p)_+(n+2)}{p(n+2) - n -p}} + \frac{2^{\gamma }}{K^{\gamma-p}}.
\]

\subsubsection{On superlevel estimates: from local to global scales}
Recall that the collection $\{2^3 Q_i\}_{i \in \N}$ covers the superlevel set $\sfE(\lambda,r_1)$ defined in ~\eqref{def-superlevel set}. Since $K\geq1$, we have $\sfE(K\lambda,r_1)\subset\sfE(\lambda,r_1)$, and hence the same family also covers $\sfE(K\lambda,r_1)$.
Furthermore, since the cylinders $Q_i$, with $i \in \mathbb{N}$, are pairwise disjoint and contained in $Q_{r_2}$, the preceding estimate implies
\begin{align*}
    &\iint_{\sfE(K\lambda,r_1)} |Du|^p \, \d x \d t \\
    &\quad \leq \sum_{i=1}^\infty \iint_{2^3 Q_i \cap \{|Du|> K \lambda\}} |Du|^p \, \d x \d t \\
    &\quad \leq c {\sf Q} \sum_{i=1}^\infty \left[ \iint_{Q_i \cap \{ |Du| > \lambda /4 \}} |Du|^p  \, \d x\d t + {\sf M}^p \sfF^p \iint_{Q_i \cap \{ |\sff|^\alpha > \lambda/(4 {\sf M} \sfF)\}} |\sff|^{\alpha p } \, \d x \d t \right] \\
    &\quad \leq c {\sf Q} \left[ \iint_{\sfE (\lambda/4, r_2)} |Du|^p  \, \d x\d t + {\sf M}^p \sfF^p \iint_{Q_{r_2} \cap \{ |\sff|^\alpha > \lambda/(4 {\sf M} \sfF)\}} |\sff|^{\alpha p } \, \d x \d t \right].
\end{align*}

To proceed with the analysis, we introduce \emph{truncated superlevel sets}. More precisely, for $k\geq {\sf B}\lambda_o$, we define
\begin{equation}\label{def:truncated-sets}
\sfE_k(\lambda,r_1)
:=
\Big\{
z\in Q_{r_1} :
z \text{ is a Lebesgue point of } |Du|
\text{ and }
|Du|_k(z)>\lambda
\Big\},
\end{equation}
where
\[
|Du|_k:=\min\{|Du|,k\}
\]
stands for the truncation of $|Du|$ at level $k$. Using this notation, the preceding estimate can be expressed as 
\begin{align} \label{eq:truncated-levelset-est}
&\iint_{\sfE_k(K\lambda,r_1)} |Du|^p \, \d x \d t \nonumber \\
&\quad \leq  c {\sf Q} \left[ \iint_{\sfE_k (\lambda/4, r_2)} |Du|^p  \, \d x\d t + {\sf M}^p \sfF^p \iint_{Q_{r_2} \cap \{ |\sff|^\alpha > \lambda/(4 {\sf M} \sfF)\}} |\sff|^{\alpha p } \, \d x \d t \right],
\end{align}
since $\sfE_k (K \lambda,r_1) = \varnothing$ if $k \leq K \lambda$, and $\sfE_k (K \lambda,r_1) = \sfE (K \lambda,r_1)$ and $\sfE_k (\lambda/4,r_2) = \sfE (\lambda/4,r_2)$ if $k > K \lambda$. 

 Next, we recall that $s \in (p,p+\tfrac{n}{4} + \varepsilon)$ and $\gamma = \tfrac12 (s + p + \tfrac{4}{n} + \varepsilon)$ and multiply the inequality~\eqref{eq:truncated-levelset-est} by $\lambda^{s-p-1}$, and integrate it with respect to $\lambda$ over interval $({\sf B} \lambda_o, \infty)$. On the left hand side, by Fubini's theorem we have
\begin{align*}
    &\int_{{\sf B} \lambda_o}^\infty \lambda^{s-p-1} \iint_{\sfE_k(K\lambda,r_1)} |Du|^p \, \d x \d t \, \d \lambda \\
    &\quad = \iint_{Q_{r_1}}  |Du|^p \int_{{\sf B} \lambda_o}^{|Du|_k/K} \lambda^{s-p-1} \, \d \lambda \, \d x \d t \\
    &\quad = \frac{1}{s-p} \left[ K^{- (s-p)} \iint_{Q_{r_1}} |Du|_k^{s-p} |Du|^p\, \d x \d t - ({\sf B} \lambda_o)^{s-p} \iint_{Q_{r_1}} |Du|^p \, \d x \d t \right].
\end{align*}
Similarly, we obtain
\begin{align*}
   \int_{{\sf B} \lambda_o}^\infty \lambda^{s-p-1} \iint_{\sfE_k (\lambda/4, r_2)} |Du|^p  \, \d x\d t \, \d \lambda \leq \frac{4^{s-p}}{s-p}  \iint_{Q_{r_2}} |Du|_k^{s-p} |Du|^p\, \d x \d t,
\end{align*}
and 
\begin{align*}
    &\int_{{\sf B} \lambda_o}^\infty \lambda^{s-p-1} {\sf M}^p \sfF^p \iint_{Q_{r_2} \cap \{ |\sff|^\alpha > \lambda/(4 {\sf M} \sfF)\}} |\sff|^{\alpha p } \, \d x \d t   \, \d \lambda \\
    &\quad \leq \frac{4^{s-p}}{s-p} {\sf M}^s \sfF^s \iint_{Q_{r_2}} |\sff|^{\alpha s } \, \d x \d t.
\end{align*}
Combining the estimates above,  we obtain
\begin{align} \label{eq:trunc-est}
&\iint_{Q_{r_1}} |Du|_k^{s-p} |Du|^p \, \d x \d t \nonumber \\
&\quad \leq c_* K^{s-p} {\sf Q} \iint_{Q_{r_2}} |Du|_k^{s-p} |Du|^p \, \d x \d t \\
&\qquad + (K {\sf B} \lambda_o)^{s-p} \iint_{Q_{r_1}}|Du|^p \, \d x \d t + c_* K^{s-p} {\sf Q} {\sf M}^s \sfF^s \iint_{Q_{r_2}} |\sff|^{\alpha s} \, \d x \d t, \nonumber
\end{align} 
in which  $c_* = c_* (s,p,n,N,\nu,L)$, and
$${\sf Q} = \delta + \frac{\omega(R)^{p'}}{\delta^\frac{(2-p)_+}{p-1}} + \frac{1}{{\sf M}^p \delta^\frac{(2-p)_+(n+2)}{p(n+2) - n -p}} + \frac{2^{\gamma }}{K^{\gamma-p}}.
$$
We choose $K$ large enough so that 
$$
c_* K^{s-p} 2^{\gamma} K^{p - \gamma} = \tfrac18 \iff K = \left( 8 c_* 2^{\gamma} \right)^\frac{1}{\gamma -s}.
$$
Similarly, we choose $\delta$ such that
$$
c_* K^{s-p} \delta = \tfrac18 \iff \delta = \tfrac{1}{8 c_*} (8c_* 2^{\gamma})^\frac{p-s}{\gamma-s},
$$
and afterwards,  we choose an upper bound $R_o>0$ small enough for the radius $R$ such that 
$$
c_* K^{s-p} \delta^{- \frac{(2-p)_+}{p-1}} \omega(R_o)^{p'} = \tfrac{1}{8},
$$
and finally $\sf M$ such that
$$
c_* K^{s-p} {\sf M}^{-p} \delta^{- \frac{(2-p)_+(n+2)}{p(n+2) - n-p}} = \tfrac18, $$
that is, 
\begin{align*}
{\sf M} &= (8 c_*)^{\frac1p\left[1 + \frac{(2-p)_+(n+2)}{p(n+2) - n-p}\right] } (8c_*2^{\gamma})^{\frac{s-p}{(\gamma - s)p}\left[1 + \frac{(2-p)_+(n+2)}{p(n+2) - n-p}\right]} \\
&= \left[ (8c_*)^{\gamma-p} 2^{\gamma (s-p)} \right]^{\frac{1}{p (\gamma-s)} \left[ 1 + \frac{(2-p)_+(n+2)}{p(n+2) - n-p}\right]}
\end{align*}

Observe that $K, \delta$ and $\sf M$ depend on $s,p,n,N,\nu$ and $L$, while $R_o$ depends on the same parameters and additionally on $\omega (\cdot)$. These choices imply $ c_* K^{s-p} {\sf Q} \leq \tfrac12$. Then, by using the definition of $\sf B$ in~\eqref{def:B} and  by applying the iteration lemma~\cite[Lemma 6.1]{Giusti}, and subsequently passing to the limit as $k \to \infty$ together with application of Fatou's lemma, we have
\begin{align*}
    &\iint_{Q_{\frac12 R}} |D u|^s  \, \dx\dt  \\
    &\quad\leq c \lambda_o^{s-p} \iint_{Q_R} |Du|^p \, \d x \d t + c \left[ \iint_{Q_R} |\sff|^{\alpha s} \, \d x \d t \right]^{\frac{s}{p(n+2)- (n+p)} +1}.
\end{align*}
Using the definition of $\alpha$ in~\eqref{def:alpha}, and after taking mean values on both sides and raising the resulting inequality to the power $\frac{1}{s}$, we obtain
\begin{align*}
    \bigg[\biint_{Q_{\frac12 R}} &|D u|^s  \, \dx\dt\bigg]^{\frac1s} \\
   &\leq 
   c\lambda_o^{1-\frac{p}{s}}
   \bigg[ \biint_{Q_{R}} |D u|^p \, \dx\dt\bigg]^{\frac{1}{s}}
    + 
   c 
   \bigg[ \biint_{Q_{R}} |R\sff|^{\alpha s} \, \dx\dt\bigg]^{\frac1{\alpha s} \frac{n+2}{n(p-1)+p}} \\
    &\leq 
    c\lambda_o + 
   c
   \bigg[ \biint_{Q_{R}} |R\sff|^{\alpha s} \, \dx\dt\bigg]^{\frac1{\alpha s} \frac{n+2}{n(p-1)+p}},
\end{align*}
where $\lambda_o\ge 1$ and $d\ge 1$. It follows from the definition of $\lambda_o$  in~\eqref{def:lambda_o} together with $R\le R_o \le 1 $ that 
\begin{align*}
    \lambda_o
    \le  
    c\Bigg[
    \biint_{Q_R}|Du|^p\,\dx\dt
    +
    \bigg[
    \biint_{Q_R}|R\sff|^{\alpha s}\,\dx\dt + 1
    \bigg]^{\frac{1}{\alpha s}\frac{p(n+2)}{p(n+2)-(n+p)}} \Bigg]^\frac{d}{p}.
\end{align*}
Indeed, by applying H\"older inequality, using $q'$ defined in~\eqref{eq:q-q'} and $\alpha s \le q'$, we obtain
\[
 \bigg[
    \biint_{Q_R}|R\sff|^{q'}\,\dx\dt
    \bigg]^{\frac{1}{q'}\frac{p(n+2)}{p(n+2)-(n+p)}}  
    \le  \bigg[
    \biint_{Q_R}|R\sff|^{\alpha s}\,\dx\dt
    \bigg]^{\frac{1}{\alpha s}\frac{p(n+2)}{p(n+2)-(n+p)}}.
\]
Finally, we combine the inequalities above to obtain
\begin{align} \label{eq:final-est}
    \bigg[\biint_{Q_{\frac12 R}} &|D u|^s  \, \dx\dt\bigg]^{\frac1s} \nonumber \\
    &\leq 
    c \Bigg[
    \biint_{Q_R}|Du|^p\,\dx\dt
    +
    \bigg[
    \biint_{Q_R}|R\sff|^{\alpha s}\,\dx\dt + 1
    \bigg]^{\frac{1}{\alpha s}\frac{p(n+2)}{p(n+2)-(n+p)}} \Bigg]^\frac{d}{p}
\end{align}
which holds, whenever $0<R \leq R_o$. In addition, if $R_o < R \leq 1$, by standard covering argument we cover the cylinder $Q_{R/2}$ by a collection of cylinders $\{Q_{R_o/4}(z_j)\}_{j=1}^l$, where $z_j \in Q_{R/2}$ for every $j$, and 
\[
l \leq c(n) \left( \frac{R}{R_o}\right)^{n+2} \leq c(n) R_o^{-n-2},
\]
since $R \leq 1$. Observe that $Q_{R_o/2}(z_j) \subset Q_R$ for every $j$. Therefore, the desired estimate holds for any $R \in (0, 1]$, with the constant $c$ depending additionally on $\omega (\cdot)$ through $R_o$.

\subsection{Proof of Theorem~\ref{thm:main} with VMO coefficients}

Suppose that $\sfA (x,t,\xi) = e(x) \sfa (t,\xi)$, where $\sfa$ satisfies~\eqref{bounds-A} and $e$ satisfies~\eqref{eq:e-uniform-ell} and~\eqref{eq:e-VMO}. Let $v_i$ be a weak solution in the function space~\eqref{eq:vi-space} to
\begin{equation} \label{sol:C-D_coefficients}
\left\{
\begin{array}{cl}
      \partial_t v_i - \Div ( e(x) \sfa (t,Dv_i) )
      = 0,
      & 
      \mbox{in $ 2^5 Q_i$,} \\[7pt]
      v_i = u, & \mbox{on  $\partial_{\rm par} \big(2^5 Q_i\big)$,}
   \end{array}
\right.
\end{equation}
where $u$ is a weak solution to~\eqref{par-sys}.

We apply Lemma~\ref{lem:comparison-vmo}, we obtain
\begin{align} \label{eq:comparison-vmo-1}
\iint_{2^5 Q_i} |Du- Dv_i|^p \, \d x\d t &\leq \delta \iint_{2^5 Q_i} (\mu^2 + |Du|^2)^\frac{p}{2} \, \d x \d t \\
&\quad + \frac{c}{\delta^\frac{(2-p)_+ (n+2)}{p(n+2) -n-p}}\left[ \iint_{2^5 Q_i} |\sff|^{q'} \, \d x \d t \right]^{\frac{1}{q'} \frac{p(n+2) -n}{p(n+2) -n-p}}, \nonumber
\end{align}
for any $\delta \in (0,1]$ and $c = c(n,N,p,\nu,L)$. We now argue as in~\eqref{eq:v_i-subintinsic}. Setting $\delta=1$ in the inequality above and using the facts that $\mu\leq1\leq\lambda$ and $\sf M>1$, we obtain
\begin{equation} \label{eq:vi-subintrinsic}
    \biint_{2^5 Q_i} |Dv_i|^p \, \d x \d t \leq 2^p\biint_{2^5 Q_i} |Du-Dv_i|^p \, \d x \d t + 2^p\biint_{2^5 Q_i} |Du|^p \, \d x \d t \leq c \lambda^p,
\end{equation}
for $c = c(n,N,p,\nu,L)$.

Next, let us consider
\begin{align*}
    w_i&\in C
    \big(
    [t_i-\lambda^{2-p}(2^{4}\rho_i)^2,
    t_i+\lambda^{2-p}(2^{4}\rho_i)^2);
    L^2 \big(B_{2^{4}\rho_i}(x_i);\R^k
    \big)\big)\\
    &\phantom{\in\,}
    \cap
    L^p\big(
    t_i-\lambda^{2-p}(2^{4}\rho_i)^2,
    t_i+\lambda^{2-p}(2^{4}\rho_i)^2;
    W^{1,p} \big(B_{2^{4}\rho_i}(x_i);\R^k
    \big)\big)
\end{align*}
 be the unique weak solution to
\begin{equation*} 
\left\{
\begin{array}{cl}
      \partial_t w_i - \Div (e_i \sfa (t,Dw_i) )
      = 0,
      & 
      \mbox{in $ 2^4 Q_i$,} \\[7pt]
      w_i = v_i, & \mbox{on  $\partial_{\rm par} \big(2^4 Q_i\big)$,}
   \end{array}
\right.
\end{equation*}
in which $e_i = \displaystyle \bint_{2^4 B_i} e(x) \, \d x$ and $v_i$ is a weak solution in~\eqref{sol:C-D_coefficients}. Subtracting the equations satisfied by $v_i$ and $w_i$ yields
\begin{align*}
&\partial_t (v_i - w_i) - \Div ( e_i (\sfa (t,Dv_i) - \sfa(t,Dw_i) ) ) = \Div ( (e(x) - e_i) \sfa(t,Dv_i)) \quad \text{ in } 2^4 Q_i,
\end{align*}
with $w_i-v_i = 0$ on $\partial_{\rm par} \left( 2^4 Q_i\right)$.  Arguing as in the proof of Lemma~\ref{lem:comparison} using Steklov averages (that is, testing formally with $v_i -w_i$), we obtain
\begin{align*}
    &\iint_{2^4 Q_i} e_i (\sfa(t,Dv_i) - \sfa(t,Dw_i) ) \cdot (Dv_i - Dw_i)  \, \d x \d t \\
    &\quad \leq \iint_{2^4 Q_i} |e(x) - e_i| |\sfa(t,Dv_i)| |Dv_i - Dw_i| \, \d x \d t.
\end{align*}
 Using the growth conditions to $\sfa$ in~\eqref{bounds-A} and bounds for $e$ and $e_i$ in~\eqref{eq:e-uniform-ell} combined with the Young's inequality with exponents $p$ and $p'$, we obtain
\begin{align} \label{eq:vmo-est}
&C \iint_{2^4 Q_i} (\mu^2 + |Dv_i|^2 + |Dw_i|^2)^\frac{p-2}{2} |Dv_i - Dw_i|^2 \, \d x \d t\nonumber \\
&\quad \leq \iint_{2^4 Q_i} |e(x) - e_i| |\sfa (t, Dv_i)| |Dv_i - Dw_i| \, \d x \d t \nonumber \\
&\quad \leq L \iint_{2^4 Q_i} |e(x) - e_i|(\mu^2 + |Dv_i|^2)^\frac{p-1}{2} |Dv_i - Dw_i| \, \d x \d t \nonumber \\
&\quad \leq c \iint_{2^4 Q_i} (\mu^2 + |Dv_i|^2)^\frac{p-1}{2} |Dv_i - Dw_i| \, \d x \d t  \\
&\quad \leq c \iint_{2^4 Q_i} (\mu^2 + |Dv_i|^2)^\frac{p}{2} +  (\mu^2 + |Dv_i|^2)^\frac{p-1}{2}|Dw_i| \, \d x \d t \nonumber \\
&\quad \leq c \kappa^{-\frac{1}{p-1}} \iint_{2^4 Q_i} (\mu^2 + |Dv_i|^2)^\frac{p}{2} \, \d x \d t +  \kappa \iint_{2^4 Q_i} |Dw_i|^p \, \d x\d t \nonumber
\end{align}
for any $\kappa \in (0,1]$. Observe that, by Lemma~\ref{lem:tech} together with the estimate in~\eqref{eq:vmo-est}
\begin{align*}
\iint_{2^4 Q_i} |Dw_i|^p \, \d x \d t &\leq \tilde c \iint_{2^4 Q_i} (\mu^2 + |Dv_i|^2)^\frac{p}{2} \, \d x \d t \\
&\quad + \tilde c \iint_{2^4 Q_i} (\mu^2 + |Dv_i|^2 + |Dw_i|^2)^\frac{p-2}{2} |Dv_i - Dw_i|^2 \, \d x \d t\\
&\leq c_{\kappa}\iint_{2^4 Q_i} (\mu^2 + |Dv_i|^2)^\frac{p}{2} \, \d x \d t + \frac{\tilde c \kappa}{C} \iint_{2^4 Q_i} |Dw_i|^p \, \d x\d t.
\end{align*}
By choosing $\kappa = \frac{C}{2 \tilde c }$ in the inequality above, using the subintrinsic property of $v_i$ in ~\eqref{eq:vi-subintrinsic} and $\mu \leq 1 \leq \lambda$ implies
\begin{equation} \label{eq:w-i-subintrinsic}
\biint_{2^4 Q_i} |Dw_i|^p \, \d x \d t \leq c \biint_{2^4 Q_i} (\mu^2 + |Dv_i|^2)^\frac{p}{2}\, \d x \d t  \leq c \lambda^p,
\end{equation}
for $c = c(n,N,p,\nu,L)$. Moreover, we can estimate~\eqref{eq:vmo-est} further as
\begin{align} \label{eq:comparison-v-w}
&C \iint_{2^4 Q_i} (\mu^2 + |Dv_i|^2 + |Dw_i|^2)^\frac{p-2}{2} |Dv_i - Dw_i|^2 \, \d x \d t \nonumber\\
&\quad \leq L \iint_{2^4 Q_i} |e(x) - e_i|(\mu^2 + |Dv_i|^2)^\frac{p-1}{2} |Dv_i - Dw_i| \, \d x \d t \nonumber \\
&\quad \leq \tilde \delta^{- \frac{1}{p-1}} L^{p'} \iint_{2^4 Q_i} |e(x)-e_i|^\frac{p}{p-1}  (\mu^2 + |Dv_i|^2)^\frac{p}{2}\, \d x \d t \\
&\qquad + \tilde \delta \iint_{2^4 Q_i} |Dv_i - Dw_i|^p \, \d x \d t \nonumber
\end{align}
for any $\tilde \delta > 0$. Let  $\sigma = \sigma (p,n,N,L,\nu)> p$ denote the parameter from Lemma~\ref{lem:HI-KL}. By using H\"older's inequality with exponents $\frac{\sigma}{p}$ and $\frac{\sigma}{\sigma-p}$ together with Lemma~\ref{lem:HI-KL} for $v_i$ (which is applicable due to~\eqref{eq:vi-subintrinsic}),~\eqref{eq:e-uniform-ell}, and the fact that $\mu \leq 1 \leq \lambda$, we obtain 
\begin{align*}
&\iint_{2^4 Q_i} |e(x)-e_i|^\frac{p}{p-1}  (\mu^2 + |Dv_i|^2)^\frac{p}{2}\, \d x \d t \\
&\quad \leq |2^{4} Q_i| \left[ \biint_{2^4 Q_i} |e(x) - e_i|^{\frac{p}{p-1} \frac{\sigma}{\sigma-p}}\, \d x \d t  \right]^\frac{\sigma-p}{\sigma} \left[ \biint_{2^4 Q_i} (\mu^2 + |Dv_i|^2)^\frac{\sigma}{2} \, \d x \d t \right]^\frac{p}{\sigma} \\
&\quad \leq c |2^{4} Q_i| (2L)^{\frac{p}{p-1} - \frac{\sigma-p}{\sigma}} \left[ \bint_{2^4 B_i} |e(x) - e_i| \, \d x  \right]^\frac{\sigma-p}{\sigma} \left[ \biint_{2^4 Q_i} (\mu^2 + |Dv_i|^2)^\frac{\sigma}{2} \, \d x \d t \right]^\frac{p}{\sigma} \\
&\quad \leq c |2^{4} Q_i| \omega(R)^\frac{\sigma-p}{\sigma} \lambda^p.
\end{align*}
Combining ~\eqref{eq:comparison-v-w} with the estimate above, we get
\begin{align}\label{ineq:comparion-v-w-2}
& C \iint_{2^4 Q_i} (\mu^2 + |Dv_i|^2 + |Dw_i|^2)^\frac{p-2}{2} |Dv_i - Dw_i|^2 \, \d x \d t \nonumber\\
&\le \tilde \delta \iint_{2^4 Q_i} |Dv_i - Dw_i|^p \, \d x \d t + c\tilde \delta^{- \frac{1}{p-1}} |2^{4} Q_i| \omega(R)^\frac{\sigma-p}{\sigma} \lambda^p.   
\end{align} 
Recall that by Lemma~\ref{lem:tech}
$$
(\mu^2 + |Dv_i|^2 + |Dw_i|^2)^\frac{p-2}{2} |Dv_i - Dw_i|^2 \geq \bar c|Dv_i - Dw_i|^p\quad \text{ if } p \geq 2,
$$
and~\eqref{eq:p-difference-sing} holds true in the case $p<2$. Using these facts in~\eqref{ineq:comparion-v-w-2} together with the choices $\tilde \delta = \tfrac{C \bar c}{4}$ in the case $p \geq 2$ and $\tilde \delta = \tfrac{C }{4} \delta^\frac{2-p}{p}$ in the case $p < 2$ yield
\begin{align} \label{eq:comparison-vmo-2}
\iint_{2^4 Q_i} |Dv_i - Dw_i|^p \, \d x \d t &\leq \delta \iint_{2^4 Q_i} (\mu^2 + |Dv_i|^2)^\frac{p}{2} \, \d x \d t + c \frac{\omega(R)^\frac{\sigma-p}{\sigma}}{\delta^\frac{(2-p)_+}{p(p-1)}}  |2^4 Q_i| \lambda^p \nonumber \\
&\leq c \left( \delta +  \frac{\omega(R)^\frac{\sigma-p}{\sigma}}{\delta^\frac{(2-p)_+}{p(p-1)}} \right)|2^4 Q_i| \lambda^p,
\end{align}
for any $\delta \in (0,1]$, where $c = c(n,N,p,\nu,L)$.

\subsubsection{Estimates on the superlevel sets of $|Du|$ and conclusion}
Observe that due to~\eqref{eq:w-i-subintrinsic}, $w_i$ satisfies the higher integrability from Lemma~\ref{lem:HI}.
Let $\varepsilon = \varepsilon (n,N,p,\nu,L) > 0$ be the constant from the aforementioned lemma, let $s \in (p,p + \tfrac{4}{n} + \varepsilon)$ and $\gamma = \tfrac12 ( s+ p + \tfrac{4}{n} +\varepsilon )$.
By applying Lemma~\ref{lem:elementary-superlevel} with parameters $(a, b, \alpha, \beta, K, k) = (Du, Dw_i, p, \gamma, K\lambda,Nn)$, we have
\begin{align*}
&\iint_{2^3 Q_i \cap \{|Du|> K \lambda\}} |Du|^p \, \d x \d t \\
&\quad \leq 2^p \iint_{2^4 Q_i } |Du - D w_i|^p \, \d x \d t + \frac{2^{\gamma -1}}{(K \lambda)^{\gamma-p}} \iint_{2^4 Q_i } |Dw_i|^\gamma \, \d x \d t \\
&\quad \leq 4^{p} \iint_{2^4 Q_i } |Du - D v_i|^p \, \d x \d t +4^{p} \iint_{2^4 Q_i } |Dv_i - D w_i|^p \, \d x \d t \\
&\qquad + \frac{2^{\gamma -1}}{(K \lambda)^{\gamma-p}} \iint_{2^4 Q_i } |Dw_i|^\gamma \, \d x \d t.
\end{align*} 
By using~\eqref{eq:comparison-vmo-1},~\eqref{eq:comparison-vmo-2}, and Lemma~\ref{lem:HI} for $w_i$ (which is applicable due to~\eqref{eq:w-i-subintrinsic}), as in Section~\ref{sec:proof-cont-A} we obtain
\begin{align*}
&\iint_{2^3 Q_i \cap \{|Du|> K \lambda\}} |Du|^p \, \d x \d t \\
&\quad \leq \delta \iint_{2^5 Q_i} (\mu^2 + |Du|^2)^\frac{p}{2} \, \d x \d t + \frac{c}{\delta^\frac{(2-p)_+ (n+2)}{p(n+2) -n-p}}\left[ \iint_{2^5 Q_i} |\sff|^{q'} \, \d x \d t \right]^{\frac{1}{q'} \frac{p(n+2) -n}{p(n+2) -n-p}} \nonumber \\
&\quad \quad +c \left( \delta +  \frac{\omega(R)^\frac{\sigma-p}{\sigma}}{\delta^\frac{(2-p)_+}{p(p-1)}} \right)|2^4 Q_i| \lambda^p \\
&\qquad + \frac{c 2^{\gamma -1}}{K^{\gamma-p}} \lambda^{p}|Q_i|.
\end{align*} 
Finally, using once more the subintrinsicity of the cylinder $2^5 Q_i$, i.e.,~\eqref{eq:stopping-time-above-maxradius}, we infer
\[
\iint_{2^5 Q_i} (\mu^2 + |Du|^2)^\frac{p}{2}\, \d x \d t  \le  c\big|2^5 Q_i\big| \lambda^p.
\]
and combined with the estimate for the right-hand side $\sff$ in~\eqref{eq:f-estimate}, we have
\begin{align*}
&\iint_{2^3 Q_i \cap \{|Du|> K \lambda\}} |Du|^p \, \d x \d t \\
&\quad \leq c \left(\delta + \frac{1}{\delta^\frac{(2-p)_+ (n+2)}{p(n+2)-n-p} {\sf M}^p} + \frac{\omega(R)^\frac{\sigma-p}{\sigma}}{\delta^\frac{(2-p)_+}{p(p-1)}} + \frac{2^\gamma}{K^{\gamma -p }}\right) |Q_i| \lambda^p,
\end{align*}
where $c = c (n,N,p,\nu,L)$. 
At this point, we obtain the corresponding estimate to~\eqref{eq:gradu-super} in the previous proof. The proof follows exactly the same lines as the previous one in Section~\ref{sec:proof-cont-A}, we highlight the main changes. In particular, we replace $\sf Q$ by $\tilde{\sf Q}$, defined by
\[
{\tilde{\sf Q}} = \delta + \frac{1}{\delta^\frac{(2-p)_+ (n+2)}{p(n+2)-n-p} {\sf M}^p} + \frac{\omega(R)^\frac{\sigma-p}{\sigma}}{\delta^\frac{(2-p)_+}{p(p-1)}} + \frac{2^\gamma}{K^{\gamma -p }}.
\]
In order to apply the iteration lemma~\cite[Lemma 6.1]{Giusti} in~\eqref{eq:trunc-est}, this corresponds of choosing the threshold $R_o>0$ sufficiently small so that
\[
c_* K^{s-p}
\delta^{-\frac{(2-p)_+}{p(p-1)}}
\omega(R_o)^{\frac{\sigma -p}{\sigma}}
=
\tfrac{1}{8}.
\]
Consequently, we arrive at the estimate~\eqref{eq:final-est}, which holds for all $0<R\leq R_o$, and by the same covering argument we deduce the result for all $0<R\leq 1$ concluding the proof of Theorem~\ref{thm:main}.

\medskip\medskip
\noindent
{\bf Acknowledgments.} 
 This research was funded in whole or in part by the Austrian Science Fund (FWF) [10.55776/P36295]. K. Moring was partially supported by the
Early Career Grant program 2025 at the University of Salzburg. For open access purposes, the author has applied a CC BY public copyright license to any author accepted manuscript version arising from this submission.


\begin{thebibliography}{99.}%


\bibitem{Acerbi-Min-p(x)}
\newblock E.~Acerbi and G.~Mingione.
\newblock Gradient estimates for the $p(x)$-Laplacean system. 
\newblock\emph{J. Reine Angew. Math.} 584 (2005), 117-148. 


\bibitem{Acerbi-Min}
\newblock E.~Acerbi and G.~Mingione.
\newblock Gradient estimates for a class of parabolic systems.
\newblock \emph{Duke Math. J.} 136 (2007), no. 2, 285-320.

\bibitem{ABDM}
\newblock P.~Andrade, V.~Bögelein, F. Duzaar, and K. Moring. 
\newblock Calderón-Zygmund estimates for parabolic $p$-Laplacian systems with non-divergence form right-hand sides. 
\newblock \emph{Ric. Mat.} (2026).


\bibitem{Bogo}
M.~E.~Bogovski\u{\i}
\newblock
Solutions of some problems of vector analysis, associated with the operators div and grad.
\newblock \emph{Trudy Sem. S. L. Sobolev}, no.~1 (1980), 5-40.


\bibitem{BDGLS}
\newblock V.~B\"ogelein, F.~Duzaar, U.~Gianazza, N.~Liao, and C.~Scheven.
\newblock Schauder estimates for parabolic $p$-Laplace
systems
\newblock\emph{J. Lond. Math. Soc. (2)}, to appear,  DOI:10.1112/jlms.70463


\bibitem{BDLM-1}
\newblock V.~B\"ogelein, F.~Duzaar, N.~Liao and K.~Moring.
\newblock Gradient estimates for the fractional $p$-Poisson equation.
\newblock \emph{Journal de Math\'ematiques Pures et Appliqu\'ees (9)} 204 (2025), Paper No. 103764, 25 pp.

\bibitem{BDLM-2}
\newblock V.~B\"ogelein, F.~Duzaar, N.~Liao and K.~Moring.
\newblock Sharp gradient integrability for $(s,p)$-Poisson type equations.
\newblock preprint, \emph{arXiv:2602.08944}.

\bibitem{BDM-obst}
\newblock V.~B\"ogelein, F.~Duzaar, and G.~Mingione.
\newblock Degenerate problems with irregular obstacles. 
\newblock \emph{J. Reine Angew. Math.} 650 (2011), 107-160.


\bibitem{By-Ki-Ku}
\newblock S.~S.~Byun, K.~Kim, and D.~Kumar.
\newblock Gradient estimates for mixed local and nonlocal parabolic problems with measure data. 
\newblock\emph{J. Math. Anal. Appl.} 538 (2024), no. 2, Paper No. 128351, 30 pp.

\bibitem{Byun-Wang}
\newblock S.~S.~Byun and L.~Wang.
\newblock Nonlinear gradient estimates for elliptic equations of general type. 
\newblock\emph{Calc. Var. Partial Differential Equations} 45 2012, no. 3-4, 403–419.

\bibitem{Caff-Peral}
L.~A.~Caffarelli and I.~Peral.
\newblock On $W^{1,p}$ estimates for elliptic equations in divergence form.
\newblock\emph{Comm. Pure Appl. Math.} 51 (1998), no.~1, 1-21.

\bibitem{Calderon-Zygmund-1952}
\newblock A.~P. ~Calder\'on, A.~Zygmund. 
\newblock On the existence of certain singular integrals. 
\newblock{\emph Acta Math.} 88 (1952), 85–139.

\bibitem{DiBe}
E.~DiBenedetto,
\newblock {\em Degenerate parabolic equations,}
\newblock Springer-Verlag, Universitytext xv, 387, New York, NY, 1993.

\bibitem{DiBenedetto_Holder}
E.~DiBenedetto and A.~Friedman.
\newblock H\"older estimates for nonlinear degenerate parabolic systems.
\newblock {\em J. Reine Angew. Math.}, 357 (1985), 1-22.

\bibitem{DiBenedetto-Manfredi}
E.~DiBenedetto and J.~Manfredi.
\newblock On the higher integrability of the gradient of weak solutions of certain degenerate elliptic systems. \newblock\emph{Amer. J. Math.} 115 (1993), no.~5, 1107-1134.

\bibitem{Du-Mi-pot}
F.~Duzaar and G.~Mingione. 
\newblock Gradient estimates via non-linear potentials. 
\newblock\emph{Amer. J. Math.} 133 (2011), no.~4, 1093-11149. 

\bibitem{Du-Mi-St}
F.~Duzaar, G.~Mingione and K.~Steffen.
\newblock Parabolic systems with polynomial growth and regularity. \newblock\emph{Mem. Amer. Math. Soc.} 214 (2011), no.~1005, x+118 pp.

\bibitem{Galdi}
G.~P.~Galdi. 
An Introduction to the Mathematical Theory of the 
Navier-Stokes Equations.
\newblock Springer Monographs in Mathematics, 2011.

\bibitem{Giusti}
E.~Giusti. 
\newblock\emph{Direct Methods in the Calculus of Variations.} World Scientiﬁc Publishing Company, Tuck Link,
Singapore, 2003.

\bibitem{Iwaniec}
T.~Iwaniec.
\newblock Projections onto gradient fields and $L^p$-estimates for degenerated elliptic operators. 
\newblock{\em Studia Math.} 75 (1983), no.~3, 293-312.



\bibitem{Kinnunen-Lewis:1}
\newblock J.~Kinnunen and J.~L.~Lewis.
\newblock Higher integrability for parabolic systems of $p$-Laplacian-type.
\newblock {\em Duke Math.~J.} 102 (2000), no.~2, 253-271.

\bibitem{Kinnunen-Zhou-1}
J.~Kinnunen and S.~Zhou.
\newblock A local estimate for nonlinear equations with discontinuous coefficients.
\newblock\emph{Comm. Partial Differential Equations} 24 (1999), no.~11-12, 2043-2068.

\bibitem{Kinnunen-Zhou-2}
J.~Kinnunen and S.~Zhou.
\newblock A boundary estimate for nonlinear equations with discontinuous coefficients. 
\newblock \emph{Differential Integral Equations} 14 (2001), no.~4, 475-492. 

\bibitem{Mingione:Measure-data}
G.~Mingione.
\newblock The Calderón-Zygmund theory for elliptic problems with measure data.
\newblock\emph{ Ann. Sc. Norm. Super. Pisa Cl. Sci.} (5) 6 (2007), no.~2, 195–261.

\bibitem{Scheven-1}
C.~Scheven.
\newblock Non-linear Calderón-Zygmund theory for parabolic systems with subquadratic growth. \newblock\emph{J. Evol. Equ.} 10 (2010), no. 3, 597-622. 

\bibitem{Scheven-2}
C.~Scheven.
\newblock Regularity for subquadratic parabolic systems: higher
integrability and dimension estimates. \newblock \emph{Proc. Roy. Soc. Edinburgh Sect. A} 140 (2010), no. 6, 1269--1308.

\bibitem{Sverak-Yan}
 V. \v{S}ver\'ak and X. Yan
\newblock Non-Lipschitz minimizers of smooth uniformly convex functionals. \newblock \emph{Proc. Natl. Acad. Sci} 99 (2002), no. 24, 15269–15276.
\end{thebibliography}
\end{document}